\newcommand{\R}{\ensuremath{\mathbb{R}}}
\newcommand{\tr}{\ensuremath{\mathrm{tr}}\,} 
\newcommand{\pd}{\partial}
\newcommand{\cd}{\nabla}
\newcommand{\grad}{\mathrm{grad}\,} 
\newcommand{\E}{{\rm e}} 
\newcommand{\inner}[2]{\left\langle #1,#2\right\rangle} 
\newcommand{\norm}[1]{\left|\left|#1\right|\right|} 
\newcommand{\X}{{X}} 
\newcommand{\M}{{M}} 
\newcommand{\A}{\W} 
\newcommand{\W}{\mathcal{W}} 
\newcommand{\eL}{\mathscr{L}} 
\newcommand{\lb}{\left(}
\newcommand{\rb}{\right)}
\newcommand{\lsb}{\left[}
\newcommand{\rsb}{\right]}
\newcommand{\F}{F} 
\newcommand{\xF}{{\F_x}}
\newcommand{\xA}{{\A^x}}
\newcommand{\xW}{{\W^x}}
\newcommand{\xnu}{{\nu_x}}
\newcommand{\yF}{{\F_y}}
\newcommand{\yA}{{\A^y}}
\newcommand{\yW}{{\W^y}}
\newcommand{\ynu}{{\nu_y}}
\theoremstyle{plain}
\numberwithin{equation}{section}
\newtheorem{thm}{Theorem}[section]
\newtheorem{lem}[thm]{Lemma}
\newtheorem{prop}[thm]{Proposition}
\newtheorem{defn}[thm]{Definition}
\newtheorem*{conds*}{Conditions}
\newtheorem*{auxconds*}{Ancillary Conditions}
\newtheorem*{props*}{Properties}
\theoremstyle{remark}
\newtheorem*{rem}{Remark}
\def\be #1\ee {\begin{eqnarray} #1\end{eqnarray}}
\def\benn #1\eenn {\begin{eqnarray*} #1\end{eqnarray*}}
\def\ba #1\ea {\begin{align} #1\end{align}}
\def\bann #1\eann {\begin{align*} #1\end{align*}}
\def\ben #1\een {\begin{enumerate} #1\end{enumerate}}
\def\bi #1\ei {\begin{itemize} #1\end{itemize}}
\title[Two-sided non-collapsing curvature flows]{Two-sided non-collapsing curvature flows} 
\author{Ben Andrews}
\address{Mathematical Sciences Institute, Australian National University, ACT 0200 Australia}
\address{Mathematical Sciences Center, Tsinghua University, Beijing 100084, China}
\email{ben.andrews@anu.edu.au}
\thanks{2010 \emph{Mathematics Subject Classification}. 53C44, 35K55, 58J35.}
\thanks{Research partially supported by Discovery grant DP120100097 of the Australian Research Council.}
\author{Mat Langford}
\address{Mathematical Sciences Institute, Australian National University, ACT 0200 Australia}
\email{mathew.langford@anu.edu.au}\thanks{The second author gratefully acknowledges the support of an Australian Postgraduate Award and an Australian National University HDR Supplementary Scholarship, and the support and hospitality of the Mathematical Sciences Center at Tsinghua University, Beijing, and the Department of Mathematics at East China Normal University, Shanghai during the completion of part of this work.}
\begin{document}

\begin{abstract}
It was recently shown that embedded solutions of curvature flows in Euclidean space with concave (convex), degree one homogeneous speeds are interior (exterior) non-collapsing \cite{NC}. These results were subsequently extended to hypersurface flows in the sphere and hyperbolic space \cite{NC2}. In the first part of the paper, we show that locally convex solutions are exterior non-collapsing for a larger class of speed functions than previously considered; more precisely, we show that the previous results hold when convexity of the speed function is relaxed to inverse-concavity. We note that inverse-concavity is satisfied by a large class of concave speed functions \cite{An07}. As a consequence, we obtain a large class of two-sided non-collapsing flows, whereas previously two-sided non-collapsing was only known for the mean curvature flow.

In the second part of the paper, we demonstrate the utility of two sided non-collapsing with a straightforward proof of convergence of compact, convex hypersurfaces to round points.

The proof of the non-collapsing estimate is similar to the previous results mentioned, in that we show that the exterior ball curvature is a viscosity supersolution of the linearised flow equation. The new ingredient is the following observation: Since the function which provides an upper support in the derivation of the viscosity inequality is defined on $M\times M$ (or $TM$ in the `boundary case'), whereas the exterior ball curvature and the linearised flow equation depend only on the first factor, we are privileged with a freedom of choice in which second derivatives from the extra directions to include in the calculation. The optimal choice is closely related to the class of inverse-concave speed functions.
\end{abstract}

\maketitle

\section{Introduction}

We consider embedded solutions $\X:\M^n\times [0,T)\to N_\sigma^{n+1}$ of curvature flows of the form
\begin{equation}\label{eq:CF}\tag{CF}
\pd_t\X(x,t)=-F(x,t)\nu(x,t)\,,
\end{equation}
where $N_\sigma^{n+1}$ is the complete, simply connected Riemannian manifold of constant curvature $\sigma\in \{-1,0,1\}$ (that is, either hyperbolic space $H^{n+1}$, Euclidean space $\R^{n+1}$, or the sphere $S^{n+1}$), $\nu$ is a choice of normal field 
for the evolving hypersurface $\X$, and the speed $F$ is given by a smooth, symmetric, degree one homogeneous function of the principal curvatures $\kappa_i$ of $\X$ which is monotone increasing with respect to each $\kappa_i$. Equivalently, $F$ is a smooth, monotone increasing, degree one homogeneous function of the Weingarten map $\W$ of $\X$. Moreover, we will always assume that $F$ is normalised such that $F(1,\dots,1)=1$; however, this is merely a matter of convenience--all of the results hold, up to a recalibration of constants, in the un-normalised case.

The \emph{interior} and \emph{exterior ball curvatures} \cite{NC} of a family of embeddings $\X:\M^n\times [0,T)\to \R^{n+1}$ with normal $\nu$ are, respectively, defined by $\overline k(x,t):=\sup_{y\neq x}k(x,y,t)$, and $\underline k(x,t):=\inf_{y\neq x}k(x,y,t)$, where
\ba\label{eq:k}
k(x,y,t):=\frac{2\inner{\X(x,t)-\X(y,t)}{\nu(x,t)}}{\norm{\X(x,t)-\X(y,t)}^2}\,.
\ea
Equivalently, $\overline k(x,t)$ (resp. $\underline k(x,t)$) gives the curvature of the largest region in $\R^{n+1}$ with totally umbilic boundary that lies on the opposite (resp. same) side of the hypersurface $\X(\M,t)$ as $\nu(x,t)$, and touches it at $\X(x,t)$ (with sign determined by $\nu$) \cite[Proposition 4]{NC}. Therefore, for a compact, convex embedding with outward normal, they are, respectively, the curvature of the largest enclosed, and smallest enclosing spheres which touch the embedding at $\X(x,t)$. It follows that $\kappa_{\max}\leq\overline k$ and $\kappa_{\min}\geq \underline k$.

For flows in Euclidean space, embedded, $F>0$ solutions of \eqref{eq:CF} are \emph{interior non-collapsing} when the speed is a concave function of the Weingarten map, and \emph{exterior non-collapsing} when the speed is a convex function of the Weingarten map \cite{NC}; more precisely, there exist $k_0\in\R$, $K_0>0$ such that $\underline k\geq k_0 F$ in the former case, and $\overline k\leq K_0 F$ in the latter. In particular, solutions of mean convex mean curvature flow (in which case the speed is the trace of the Weingarten map) are both interior and exterior non-collapsing. Two-sided non-collapsing has many useful consequences; for example, for uniformly convex hypersurfaces we obtain uniform pointwise bounds on the ratios of principal curvatures, and a uniform bound on the ratio of circumradius to in-radius. This leads to a new proof of Huisken's theorem \cite{Hu84} on the convergence of convex hypersurfaces to round points. Moreover, one-sided non-collapsing can also provide useful information; for example, interior non-collapsing rules out certain singularity models, such as products of the Grim Reaper curve with $\R^{n-1}$. For convex speeds, exterior non-collapsing is sufficient to obtain a bound on the ratio of circumradius to in-radius, and the proof of convergence of locally convex initial hypersurfaces to round points \cite{An94}, is also simplified. 

More recently, it was shown that the above statements have natural analogues when the ambient space is either the sphere or hyperbolic space \cite{NC2}: Considering the sphere $S^{n+1}$ as the embedded submanifold $\{X\in \R^{n+2}: \inner{X}{X}=1\}$ of $\R^{n+2}$, and hyperbolic space $H^{n+1}$ as the embedded submanifold $\{X\in \R^{n+1,1}: \inner{X}{X}=-1\}$ of Minkowski space $\R^{n+1,1}$, the function $k$ may be formally defined as in \eqref{eq:k}, except that now we take $\inner{\,\cdot\,}{\,\cdot\,}$ and $\norm{\,\cdot\,}$ to be the inner product and induced norm on, in the case of the sphere, $\R^{n+2}$, and, in the case of hyperbolic space, the spacelike vectors in $\R^{n+1,1}$. Then, if $F$ is a concave function of the curvatures, there exists $K_0>0$ such that
\bann
\frac{\overline k}{F}-\frac{1}{\eta}\leq K_0 \E^{-2\sigma \eta t}\,,
\eann
and, if $F$ is a convex function of the curvatures, there exists $k_0\in \R$ such that
\bann
\frac{\underline k}{F}-\frac{1}{\eta}\geq k_0 \E^{-2\sigma \eta t}\,,
\eann
where $\eta>0$ depends on bounds for the derivative of $F$.

Let us recall the following definition \cite{An07}:
\begin{defn}
Let $\Omega$ be an open subset of the positive cone $\Gamma_+:=\{z\in \R^n\,:\,z_i>0\;\mathrm{for\; all}\; i\}$. Define the set $\Omega_{\ast}:=\{(z_1^{-1},\dots,z_n^{-1})\}$. Then a function $f:\Omega\to\R$ is called \emph{inverse-concave} if 
the function $f_\ast:\Omega_\ast\to\R$ defined by $f_\ast\lb z_1^{-1},\dots,z_n^{-1}\rb:=f\lb z_1,\dots,z_n\rb^{-1}$ is concave. 
\end{defn}

The main result of this article may now be stated as follows:
\begin{thm}\label{thm:NC}
Let $f:\Gamma_+\to\R$ be a smooth, symmetric function which is homogeneous of degree one, and monotone increasing in each argument. Let $\X$ be a solution of \eqref{eq:CF} with speed given by $F=f(\kappa_1,\dots,\kappa_n)$, where $\kappa_i$ are the principal curvatures of $\X$. Then, if $f$ is inverse-concave, $\X$ is \emph{exterior non-collapsing}; that is,
\ben
\item\label{case:R} If $N^{n+1}=\R^{n+1}$, then, for all $(x,t)\in\M\times[0,T)$,
\bann
\frac{\underline k(x,t)}{F(x,t)} \geq \inf_{M\times\{0\}}\frac{\underline k}{F}\,.
\eann
\item\label{case:S} If $N^{n+1}=S^{n+1}$, and $\tr(\dot F)\leq\eta$, then, for all $(x,t)\in\M\times[0,T)$,
\bann
\frac{\underline k(x,t)}{F(x,t)}-\frac{1}{\eta}\geq \inf_{M\times\{0\}}\left(\frac{\underline k}{F}-\frac{1}{\eta}\right) \E^{-2\eta t}\,.
\eann
\item\label{case:H} If $N^{n+1}=H^{n+1}$, and $\tr(\dot F)\geq \eta$, then, for all $(x,t)\in\M\times[0,T)$,
\bann
\frac{\underline k(x,t)}{F(x,t)}-\frac{1}{\eta }\geq \inf_{M\times\{0\}}\left(\frac{\underline k}{F}-\frac{1}{\eta}\right) \E^{2\eta t}\,,
\eann
where $\dot F$ is the derivative of $F$ with respect to the Weingarten map.
\een
\end{thm}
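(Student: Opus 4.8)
The plan is to run a maximum-principle argument on the quantity $Q := \underline k / F$ (or its shifted variants $\underline k/F - 1/\eta$ in the sphere and hyperbolic cases), using the characterisation of $\underline k$ as an infimum of the two-point function $k(x,y,t)$ defined in \eqref{eq:k}. Since $\underline k$ is an infimum, it is only a viscosity supersolution in general, so the strategy is: at a first time $t_0$ and point $x_0$ where the conclusion would fail, either the infimum defining $\underline{k}(x_0,t_0)$ is attained at some interior $y_0 \neq x_0$, or it is attained ``in the limit'' as $y \to x_0$, giving the boundary case governed by a function on $TM$ rather than $M \times M$. In the interior case, $k(\cdot,\cdot,t)$ provides a smooth upper support for $\underline k$ near $(x_0,t_0)$ (after subtracting $Q(x_0,t_0) F$), so it suffices to show that $k(x,y,t) - Q(x_0,t_0)F(x,t)$, viewed as a function of $(x,y,t)$, cannot have this critical behaviour: one computes $\pd_t$, the first spatial derivatives (which vanish at the critical point, yielding the usual algebraic relations between $\nu(x_0)$, $\nu(y_0)$ and the separation vector $w := \X(x_0)-\X(y_0)$), and then the second spatial derivatives, into which one substitutes the evolution equations for $\X$, $\nu$ and $\W$ under \eqref{eq:CF}.

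The key new step is the choice of which second derivatives from the $y$-directions (and, in the boundary case, from the extra tangent directions) to feed into the second-derivative test. Because $\underline k$ and the linearised operator $\mathcal L = \dot F^{ij}\cd_i\cd_j + (\text{lower order})$ depend only on the $x$-factor, one has complete freedom in choosing a subspace of $T_{y_0}M$ (equivalently a relation pairing $x$-directions with $y$-directions) along which to extract the constraint ``Hessian $\geq 0$''. I would choose the pairing that reflects the $y \mapsto y$ geometry through $k$ in a way that produces a $\dot F^{ij}$-weighted sum; the resulting inequality will involve both $\W(x_0)$ and $\W(y_0)$, together with the reflection relating $\nu(y_0)$ to $\nu(x_0)$ across the sphere of curvature $\underline k(x_0,t_0)$. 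Combining the first-order relations with this second-order inequality, the terms organise into: (i) a ``good'' term that is manifestly non-negative, (ii) the term $\bigl(\mathcal L Q\bigr)(x_0,t_0)$ one wants to bound, and (iii) a curvature-type term of the form $F\bigl(\W(y_0)\bigr) - $ (a tangent-restricted, reflected version of $F$ evaluated using $\W(x_0)$ and $\underline k(x_0,t_0)$). The assertion is that inverse-concavity of $f$ is exactly the condition making term (iii) have the right sign — this is where the identity $f_\ast(z^{-1}) = f(z)^{-1}$, differentiated once and twice, enters, converting concavity of $f_\ast$ into the needed inequality between $F$ at $\W(y_0)$ and at the reflected configuration.

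The main obstacle, and the crux of the whole argument, is precisely verifying this algebraic-geometric inequality (iii): one must show that for the optimal choice of second-derivative directions, the contribution from $\dot F^{ij}$ contracted against the $y$-Hessian of $k$, minus the $x$-side terms, is controlled by $[f$ inverse-concave$]$. Concretely, after diagonalising $\W(x_0)$ and using the reflection to express the relevant components of $\W(y_0)$, the inequality should reduce to a statement of the form $\sum_i \dot f^i \bigl(\mu_i + \underline k - \tfrac{2\underline k^2}{\kappa_i + \underline k}\bigr) \geq F(\W(y_0))$ or similar, where $\mu_i$ are eigenvalue-type data at $y_0$; establishing that inverse-concavity of $f$ (via the tangency/ordering of $f_\ast$'s graph with its tangent plane) yields this is the step requiring genuine care, and is the natural analogue of the role played by concavity in the interior non-collapsing proof of \cite{NC}. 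The boundary case is handled by the same computation performed on $TM$ in place of $M\times M$, with the direction-choice freedom now used among the fibre directions; one must separately check that the limiting objects are well-defined and that the inverse-concavity inequality degenerates correctly. Finally, the ambient curvature $\sigma$ contributes only lower-order terms — the $\E^{\pm 2\eta t}$ factors and the $1/\eta$ shifts in cases \eqref{case:S} and \eqref{case:H} — so once the Euclidean case \eqref{case:R} is done, the other two follow by tracking these extra terms and invoking $\tr(\dot F)\lessgtr\eta$ to absorb them with the correct sign, exactly as in \cite{NC2}.
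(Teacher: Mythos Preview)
Your proposal is correct and follows essentially the same approach as the paper: show that $\underline k$ is a viscosity supersolution of the linearised flow equation by splitting into interior and boundary cases, exploit the freedom in the $y$-direction second derivatives (the paper encodes this as a matrix $\Lambda$ with optimal choice $\Lambda=(A-kI)(B-kI)^{-1}$) to produce an algebraic inequality guaranteed by inverse-concavity, and then conclude by a comparison argument. The precise form of the crux inequality you leave open is, in the paper's notation, $f(z)-f(a)-\sum_i\dot f^i(a)\bigl[(a_i-k)-(a_i-k)^2/(z_i-k)\bigr]\geq 0$, with inverse-concavity entering as $\ddot f^{ij}+2\dot f^i\delta^{ij}/z_i\geq 0$.
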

In particular, combining Theorem \ref{thm:NC} with the previous non-collapsing results \cite{NC,NC2}, we find that solutions of flows in spaceforms by concave, inverse-concave speed functions are both interior and exterior non-collapsing. We note that concave speed functions satisfy $\tr(\dot F)\geq 1$, so in that case we may take $\eta=1$ in case (\ref{case:H}) of Theorem \ref{thm:NC}.

We note that the class of admissible speeds which are both concave and inverse-concave is surprisingly large \cite{An07}, and includes, for example, the degree one homogeneous ratios and roots of the elementary symmetric polynomials.

The authors wish to express their thanks to Chen Xuzhong and Yong Wei for their helpful comments and suggestions on earlier versions of this work.

\section{Proof of Theorem \ref{thm:NC}}

We first extend (cf. \cite{NC}) $k(\,\cdot\,,\,\cdot\,,t)$ to a continuous function on the compact manifold with boundary $\widehat \M$. As a set, $\widehat \M:=(\M\times \M\setminus D)\sqcup S\M$, where $D:=\{(x,x):x\in \M\}$ is the diagonal submanifold and $SM$ is the unit tangent bundle with respect to the metric at time $t$. The manifold-with-boundary structure is defined by the atlas generated by all charts for $(M\times M)\setminus D$, together with the charts $\widehat Y$ defined by $\widehat Y(z,s):=\big(\exp(sY(z)),\exp(-sY(z))\big)$ for $s$ sufficiently small, where $Y$ is a chart for $SM$. The extension is then given by setting $k(x,y,t):=\W_{(x,t)}(y,y)$ for $(x,y)\in S_{(x,t)}\M$. 

We also recall some useful notation \cite{NC}; namely, we define
\bann
d(x,y,t):=\norm{\X(x,t)-\X(y,t)} \quad\mbox{and}\quad w(x,y,t):=\frac{\X(x,t)-\X(y,t)}{\norm{\X(x,t)-\X(y,t)}}\,,
\eann
and use scripts $x$ and $y$ to denote quantities pulled back to $\M\times \M$ by the respective projections onto the first and second factor. With this notation in place, $k$ may be written as
\bann
k=\frac{2}{d^2}\inner{dw}{\nu_x}\,.
\eann


Theorem \ref{thm:NC} is a direct consequence of the following proposition:

\begin{prop}\label{prop:visc}
If the flow speed $F$ is inverse-concave, then the exterior ball curvature $\underline k$ is a viscosity supersolution of the equation
\begin{equation}\label{eq:LF}
\pd_t u=\eL u
+\lb |\W|^2_F-\sigma \tr(\dot F)\rb u+2\sigma F\,,
\end{equation}
where $\eL:=\dot F^{ij}\cd_i\cd_j$, $\inner{u}{v}_F:=\dot F^{ij}v_iv_j$, and $|\W|^2_F:=\dot F^{ij}\W^2_{ij}$.
\end{prop}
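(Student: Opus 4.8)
The plan is to follow the strategy of \cite{NC,NC2}: establish the viscosity supersolution property by a support-function argument on $\widehat M$, and then reduce Theorem \ref{thm:NC} to Proposition \ref{prop:visc} by a maximum-principle/ODE-comparison argument, using that $\underline k/F$ (with the appropriate $1/\eta$ shift) satisfies a favourable evolution inequality once $\underline k$ is a supersolution of \eqref{eq:LF} and $F$ is a solution of its own linearised equation $\pd_t F = \eL F + (|\W|^2_F + \sigma\tr(\dot F))F$. Since this last reduction is essentially identical to the arguments already in the literature, the real content is Proposition \ref{prop:visc}. To prove it, fix a point $(x_0,t_0)$ and a smooth function $\phi$ touching $\underline k(\cdot,t_0)$ from below at $x_0$ in space and from below in time; we must show $\pd_t\phi \le \eL\phi + (|\W|^2_F - \sigma\tr(\dot F))\phi + 2\sigma F$ at $(x_0,t_0)$. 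Because $\underline k(x,t) = \inf_{y} k(x,y,t)$ is attained at some $y_0$ (interior case $y_0 \ne x_0$, or boundary case $y_0 \in S_{x_0}M$ — the two cases are handled in parallel, the boundary case being the limiting one), the function $(x,y)\mapsto k(x,y,t) - \phi(x,t)$ has a spatial minimum at $(x_0,y_0,t_0)$, and a temporal one-sided minimum in $t$. First I would write down the first-order conditions ($\cd_i^x$ and $\cd_i^y$ derivatives vanish) and the consequence that the full Hessian of $k-\phi$ in all $2n$ variables $(x,y)$ is nonnegative at the minimum; this is where the new idea enters.

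The key new step: contract the $2n\times 2n$ Hessian inequality $\cd^2(k - \phi) \ge 0$ against a cleverly chosen nonnegative-definite test matrix built from $\dot F$ acting on the combined $x$ and $y$ directions, rather than only the $x$-directions as in \cite{NC}. Schematically, for a test matrix of block form determined by $\dot F^{ij}$ on the $x$-block and a second copy involving $\dot F$ (or its ``inverse-concavity partner'') on the $y$-block, together with suitable cross terms coming from parallel transport of $\dot F$ along the minimizing geodesic/chord, one gets
\bann
\dot F^{ij}\cd_i\cd_j\phi \le \dot F^{ij}\lb\cd_i^x\cd_j^x + 2\,\text{(cross)}_{ij} + \text{($y$-block)}_{ij}\rb k\,.
\eann
The right-hand side is then evaluated using explicit formulas for the first and second spatial derivatives of $k$ in terms of $d$, $w$, $\nu_x$, $\nu_y$, $\W_x$, $\W_y$ (these are the computations already performed in \cite{NC,NC2}, adapted to the spaceform via the Gauss formula and the $\sigma$-dependent geometry of $N_\sigma^{n+1}$). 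The point is that the $y$-second-derivatives of $k$, which in \cite{NC} were simply discarded, here contribute a term of the form $-\dot F^{ij}(\text{stuff})_{ik}(\W_y)_{kl}^{-1}(\text{stuff})_{lj}$ — an inverse-Weingarten-type expression on the second factor — and it is precisely inverse-concavity of $f$ that guarantees this term has the right sign to be absorbed. Concretely, inverse-concavity is equivalent (see \cite{An07}) to the inequality
\bann
\sum_{i,j}\frac{\pd^2 F}{\pd\W_{ij}\pd\W_{kl}}\xi_{ij}\xi_{kl} + 2\sum_{i,j}\dot F^{ij}(\W^{-1})_{kl}\xi_{ik}\xi_{jl}\ \ge\ 0
\eann
for symmetric $\xi$, and this is exactly the algebraic fact that makes the chosen second-derivative contraction nonnegative; equivalently one uses that the second-order term in $k$ coming from the $y$-directions, after the optimal choice of how much of each extra direction to include, matches the concavity defect of $f$ under inversion.

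Assembling: after the contraction and substitution, the terms involving $\W_x$ at $x_0$ reorganise (using that $\kappa_{\min}(x_0) \ge \underline k(x_0,t_0) = k(x_0,y_0,t_0)$, the first-order conditions, and the homogeneity $\dot F^{ij}\W_{ij}=F$) into exactly $\eL\phi + (|\W|^2_F - \sigma\tr(\dot F))\phi + 2\sigma F$, while the temporal condition $\pd_t\phi \le \pd_t k$ supplies the left-hand side; the $\sigma$-terms $-\sigma\tr(\dot F)\,\phi + 2\sigma F$ arise from the ambient curvature contributions to the second derivatives of $d$, $w$ and to the evolution of $k$ in $N_\sigma^{n+1}$, precisely as in \cite{NC2}. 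I expect the main obstacle to be bookkeeping in the boundary case $y_0 \in S_{x_0}M$: there the natural coordinates on $\widehat M$ degenerate, the chord direction $w$ and the normal $\nu_x$ coincide in the limit, and one must pass to the charts $\widehat Y$ described above, carefully track which combinations of derivatives remain finite, and verify that the inverse-concavity inequality survives the limit (it does, because $(\W_y)^{-1}$ appears contracted in directions where it stays controlled). A secondary technical point is choosing the cross-terms in the test matrix so that the parallel transport identification of $T_{x_0}M$ with $T_{y_0}M$ along the geodesic is compatible with the symmetric-function structure of $F$; getting the signs of the $\sigma$-curvature corrections to the transported frame right is the fiddly part, but it is forced by the requirement that the final inequality close up.
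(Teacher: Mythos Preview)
Your overall strategy matches the paper's: reduce to a support-function argument, exploit the freedom in the $y$-directions when testing the Hessian of $k-\phi$, and close using inverse-concavity. The paper implements this by applying to $k-\phi$ the operator $\widehat\eL=\dot F^{ij}\cd_{\pd_{x^i}+{\Lambda_i}^p\pd_{y^p}}\cd_{\pd_{x^j}+{\Lambda_j}^q\pd_{y^q}}$ and then \emph{optimising} over the matrix $\Lambda$; the optimal choice in the interior case is $\Lambda=(\W_x-kI)(\W_y-kI)^{-1}$, and the resulting algebraic inequality (Proposition~\ref{prop:interiorestimate}) reduces, after diagonalising, precisely to the inverse-concavity characterisation you quote. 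Your block-matrix description is morally the same device, though the paper's $\Lambda$-parametrisation is more explicit and makes the optimisation a one-line quadratic maximisation.

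There is, however, a genuine gap in your treatment of the boundary case. It is \emph{not} a limit of the interior case ``handled in parallel'': the paper works directly on $TM$ with the function $K(x,y,t)=\W_{(x,t)}(y,y)$, and the algebraic inequality that replaces Proposition~\ref{prop:interiorestimate} there (Proposition~\ref{prop:boundaryestimate}) requires the auxiliary hypothesis $T(y_0,y_0,y_0)=0$, i.e.\ $\cd_{y_0}\W(y_0,y_0)=0$ at the touching point. This does not follow from the first- or second-order conditions of the support function; it is obtained separately by a third-derivative trick of Brendle (Lemma~\ref{lem:gradW}): the one-variable function $s\mapsto Z(x_0,\exp_{x_0}(sy_0),t_0)$ is nonnegative and vanishes to second order at $s=0$, forcing its third derivative to vanish, which is exactly $\cd_1\W_{11}=0$. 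Without this, the $\ddot F$-term in the boundary computation carries an uncontrolled $B_{11}$ contribution that inverse-concavity alone does not absorb, so your sketch as written would not close in that case.
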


We note that the speed function satisfies the equation
\bann
\pd_t F=\eL F+\lb |\W|^2_F-\sigma \tr(\dot F)\rb F
\eann
under the flow \cite{An94b}.

\begin{proof}[Proof of Proposition \ref{prop:visc}]
Consider, for an arbitrary point $(x_0,t_0)\in\M\times [0,T)$, an arbitrary lower support funtion $\phi$ for $\underline k$ at $(x_0,t_0)$; that is, $\phi$ is $C^{2,1}$ on a backwards parabolic neighbourhood $P:=U_{x_0}\times (t_0-\varepsilon,t_0]$ of $(x_0,t_0)$, and $\phi\leq \underline k$ with equality at $(x_{0},t_{0})$. Then we need to prove that the differential inequality
\bann
\pd_t \phi\geq\eL \phi+\lb |\W|^2_F-\sigma \tr(\dot F)\rb \phi+2\sigma F
\eann
holds at $(x_{0},t_{0})$.  

We note that $k(x,y,t)\geq \underline k(x,t)\geq \phi(x,t)$ for all $(x,y,t)\in \widehat \M\times [0,T)$ such that $(x,t)\in P$, and, since $k$ is continuous and $\widehat M$ is compact, we either have $\underline k(x_{0},t_{0})=k(x_{0},y_{0},t_{0})$ for some $y_{0}\in \M\setminus\{x_{0}\}$, or $\underline k(x_{0},t_{0})=\A_{(x_0,t_0)}(y_0,y_0)$ for some $y_{0}\in S_{(x_{0},t_{0})}\M$. We consider the former case first.\\

\noindent\textbf{The interior case.}\\

We first suppose that $\inf_{\M}k(x_0,\,\cdot\,,t_0)<k(x_0,y_0,t_0)$ for all boundary points $(x_0,y_0)$ of $\widehat \M$. In that case, we have $\kappa_1(x_0,t_0)>\underline k(x_0,t_0)=k(x_{0},y_{0},t_{0})$ for some $y_{0}\in \M\setminus \{x_{0}\}$, and $k(x,y,t)\geq\underline k(x,t) \geq \phi(x,t)$ for all $(x,t)\in P$ and all $y\in \M\setminus\{x\}$. In particular, we have the inequalities
\ba\label{eq:lowersupport}
\begin{split}
\pd_t(k-\phi)\leq{}& 0\,,\\
\mbox{and}\quad \widehat \eL(k-\phi)\geq{}& 0
\end{split}
\ea
at $(x_{0},y_{0},t_{0})$ for any elliptic operator $\widehat\eL$ on $\M\times\M$. We would like $\widehat\eL$ to project to $\eL$ on the first factor. This leads us to consider operators of the form $\widehat\eL=\dot F^{ij}_x\cd_{\pd_{x_i}+{\Lambda_i}^p\pd_{y^p}}\cd_{\pd_{x^j}+{\Lambda_j}^q\pd_{y^q}}$, where $\Lambda$ is any $n\times n$ matrix.

We note that, in both of the cases $\sigma=\pm 1$, the ambient Euclidean/Minkowskian derivative decomposes into tangential and normal components as $D=\overline D-\overline g\otimes\overline \nu$, where $\overline D$, $\overline g$, and $\overline \nu$ are, respectively, the induced connection, metric, and outer/future-pointing normal of $N_\sigma^{n+1}$ with respect to its embedding. Using the fact that $\inner{\overline \nu}{\overline\nu}=\sigma$, and that the ambient position vector is normal to $N_\sigma^{n+1}$, a straightforward computation yields
\ba\label{eq:Dk}
(\pd_{x^i}+{\Lambda_i}^p\pd_{y^p})k={}&\frac{2}{d^2}\left(\inner{\pd^x_i-{\Lambda_i}^p\pd^y_p}{\nu_x-kdw}+\inner{dw}{{\xA_i}^p\pd_p^x}\right)\,.
\ea
If we choose the co\"ordinates $\{x^i\}_{i=1}^n$ and $\{y^i\}_{i=1}^n$ to be orthonormal co\"ordinates (with respect to the induced metric $g$ at time ${t_0}$) centred at $x_0$ and $y_0$ respectively, then a further straightforward computation using the vanishing of \eqref{eq:Dk} at $(x_0,y_0,t_0)$ and the Codazzi equation yields
\ba
\cd_{\pd_{x^j}+{\Lambda_j}^q\pd_{y^q}}\cd_{\pd_{x^i}+{\Lambda_i}^p\pd_{y^p}}k={}&\frac{2}{d^2}\Big\{\inner{-\xA _{ij}\xnu-\sigma \delta_{ij}\X_x+{\Lambda_i}^p{\Lambda_j}^q\lb\yA_{pq}\ynu+\sigma \delta_{pq}\X_y\rb}{\nu_x-kdw}\nonumber\\
&+\inner{\pd^x_i-{\Lambda_i}^p\pd^y_p}{{\xA_j}^q\pd^x_q}-(\pd_{x^j}+{\Lambda_j}^q\pd_{y^q})k\inner{\pd^x_i-{\Lambda_i}^p\pd^y_p}{dw}\nonumber\\
&-k\inner{\pd^x_i-{\Lambda_i}^p\pd^y_p}{\pd^x_j-{\Lambda_j}^q\pd^y_q}+\inner{\pd^x_j-{\Lambda_j}^q\pd^y_q}{{\xA_i}^p\pd^x_p}\nonumber\\
&+\inner{dw}{\cd \xA_{ij}-\sigma\xA_{ij}\X_x-{\xA_i}^r\xA_{rj}\nu_x}\nonumber\\
&-(\pd_{x^i}+{\Lambda_i}^p\pd_{y^p})k\inner{\pd^x_j -{\Lambda_j}^q\pd^y_q}{dw}\Big\}\,\label{eq:D2k}
\ea
at the point $(x_0,y_0,t_0)$. 

Next, noting that the normal satisfies $D_t\nu= \overline D_t\nu+\sigma F\X=\grad F+\sigma F\X$, we compute
\ba\label{eq:Dtk}
\pd_t k={}&\frac{2}{d^2}\big(\inner{-\xF\nu_x+\yF\nu_y}{\nu_x-kdw}+\inner{dw}{\grad \xF+\sigma F \X_x}\big)\,.
\ea

Combining \eqref{eq:D2k} and \eqref{eq:Dtk}, we obtain
\bann
\left(\pd_t-{\widehat\eL}\,\right)k={}& \frac{2}{d^2}\Big\{\inner{\yF\nu_y-\dot F_x^{ij}{\Lambda_i}^p{\Lambda_j}^q\lb{\yA}_{pq}\ynu+\sigma \delta_{pq}\X_y\rb}{\nu_x-kdw}\\
&+k\dot F^{ij}_x\inner{\pd^x_i-{\Lambda_i}^p\pd^y_p}{\pd^x_j-{\Lambda_j}^q\pd^y_q} -2\dot F^{ij}_x\inner{\pd^x_j-{\Lambda_j}^q\pd^y_q}{{\xA_i}^p\pd^x_p}\\
&+\sigma \tr(\dot F_x)\inner{\X_x}{\nu_x-kdw}+2\sigma \xF\inner{\X_x}{dw}\Big\}\\
&+\frac{4}{d^2}\dot F^{ij}_x\cd_{\pd_{x^i}+{\Lambda_i}^p\pd_{y^p}}k\inner{\pd^x_j -{\Lambda_j}^q\pd^y_q}{dw}+|\xA|^2_F k
\eann
at the point $(x_0,y_0,t_0)$.

We now note that the vanishing of the $y$-derivatives at an off-diagonal extremum $y_0\in \M$ of $k(x_0,\cdot,t_0)$ determines the tangent plane to $X$ at $y_0$:

\begin{lem}[\cite{NC,NC2}]\label{lem:ytangent}
Suppose that a point $(x,y,t)$ is an off-diagonal extremum of $k$; that is, $y\neq x$ is an extremum of $k(x,\,\cdot\,,t)$. Then
$$
\mathrm{span}\{\pd^x_i-2\inner{\pd^x_i}{w}w\}_{i=1}^n=\mathrm{span}\{\pd_i^y\}_{i=1}^n
$$
at $(x,y,t)$, where $\{\pd^x_i\}^n_{i=1}$ and $\{\pd^y_i\}$ are bases for $T_xM$ and $T_yM$ respectively.
\end{lem}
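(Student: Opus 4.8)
The plan is to extract the geometry directly from the first-order condition at the extremum. Since $y\neq x$ is an extremum of the smooth function $k(x,\,\cdot\,,t)$ on $\M$, we have $\pd_{y^p}k=0$ for $p=1,\dots,n$. Differentiating $k=\tfrac{2}{d^2}\inner{\X_x-\X_y}{\nu_x}$ in the second factor --- using $\pd_{y^p}\X_y=\pd^y_p$, that $\nu_x$ is independent of $y$, and $\pd_{y^p}d^2=-2\inner{\X_x-\X_y}{\pd^y_p}$ --- a short computation gives
\bann
\pd_{y^p}k=-\frac{2}{d^2}\inner{\pd^y_p}{\nu_x-kdw}\,.
\eann
Hence the extremum condition says exactly that $\nu_x-kdw$ is orthogonal, in the ambient inner product, to the whole tangent space $T_y\M=\mathrm{span}\{\pd^y_i\}$.

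The key observation is that $\nu_x-kdw$ is a reflection of $\nu_x$. Let $R_w(v):=v-2\inner{v}{w}w$ denote reflection in $w^\perp$; since $\inner{w}{w}=1$ this is an involutive isometry of the ambient (Euclidean or Minkowskian) linear space. From $k=\tfrac2d\inner{w}{\nu_x}$ we get $2\inner{\nu_x}{w}w=kdw$, hence $R_w(\nu_x)=\nu_x-kdw$. In the Euclidean case this already finishes the argument: $R_w(\nu_x)$ is a unit vector, $T_y\M\subseteq R_w(\nu_x)^\perp$ by the previous paragraph, and $R_w(T_x\M)\subseteq R_w(\nu_x)^\perp$ because $T_x\M\perp\nu_x$ and $R_w$ is an isometry; since $T_y\M$, $R_w(T_x\M)$ and $R_w(\nu_x)^\perp$ are all $n$-dimensional they coincide, and $R_w(T_x\M)=\mathrm{span}\{\pd^x_i-2\inner{\pd^x_i}{w}w\}$ is the asserted span.

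For $N^{n+1}_\sigma=S^{n+1}$ or $H^{n+1}$ one must also carry the ambient position vectors: here $(T_x\M)^\perp=\mathrm{span}\{\X_x,\nu_x\}$ and $(T_y\M)^\perp=\mathrm{span}\{\X_y,\nu_y\}$ are two-dimensional in $\R^{n+2}$, resp. $\R^{n+1,1}$. Using $\inner{\X_x}{\X_x}=\inner{\X_y}{\X_y}=\sigma$ and expanding $d^2=\inner{\X_x-\X_y}{\X_x-\X_y}$ one finds $\inner{\X_x}{w}=\tfrac d2$, so $R_w(\X_x)=\X_x-dw=\X_y$. Because $R_w$ is an isometry and $\inner{\X_x}{\nu_x}=0$, the vectors $\X_y=R_w(\X_x)$ and $\nu_x-kdw=R_w(\nu_x)$ are orthogonal and nonzero; both are orthogonal to $T_y\M$ (the first as a position vector, the second by the first paragraph), so $\mathrm{span}\{\X_y,\nu_x-kdw\}=(T_y\M)^\perp$ on dimension grounds. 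Since $R_w(T_x\M)$ is orthogonal to both $R_w(\X_x)$ and $R_w(\nu_x)$, we get $R_w(T_x\M)\subseteq\big((T_y\M)^\perp\big)^\perp=T_y\M$ (the last equality because $T_y\M$ is nondegenerate for the ambient form), and equality of dimensions again gives $R_w(T_x\M)=T_y\M$.

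I expect no conceptual obstacle: the whole content is the identity $\nu_x-kdw=R_w(\nu_x)$, which turns the first-order extremum condition into the reflection statement. The only care required is the spaceform bookkeeping --- establishing $R_w(\X_x)=\X_y$ and that the spans have the expected dimensions --- together with noting that the subspaces involved are nondegenerate for the (possibly indefinite) ambient inner product, so that double orthogonal complements recover the original spaces.
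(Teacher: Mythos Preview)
Your argument is correct and follows essentially the same route as the paper: both proofs use the vanishing of $\pd_{y^p}k$ to show $T_y\M\perp(\nu_x-kdw)$, then (in the spaceform case) add $\X_y$ to identify $(T_y\M)^\perp$, and finally verify that the reflected tangent space at $x$ has the same orthogonal complement. The one difference is packaging: where the paper verifies the identities $\inner{\pd^x_i-2\inner{\pd^x_i}{w}w}{\nu_x-kdw}=0$ and $\inner{\pd^x_i-2\inner{\pd^x_i}{w}w}{\X_y}=0$ by direct computation, you observe once that $R_w(\nu_x)=\nu_x-kdw$ and $R_w(\X_x)=\X_y$, so that these orthogonalities are simply the $R_w$-images of $T_x\M\perp\nu_x$ and $T_x\M\perp\X_x$ --- a tidy way to explain why the paper's computations come out as they do.
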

\begin{proof}[Proof of Lemma \ref{lem:ytangent}]
We may assume that $\{\pd_i^x\}_{i=1}^n$ and $\{\pd^y_i\}_{i=1}^n$ are orthonormal. Then $\{\pd^x_i-2\inner{\pd^x_i}{w}w\}_{i=1}^n$ is also orthonormal; note also that $\norm{\nu_x-kdw}=1$. Next, observe that the vanishing of $\pd_{y^i}k$ implies
\bann
\inner{\pd_i^y}{\nu_x-kdw}=0
\eann
for each $i$. If $\sigma\neq 0$, a short computation, using $d^2=2(\sigma-\inner{\X_x}{\X_y})$, yields
\bann
\inner{\X_y}{\nu_x-kdw}=0\,.
\eann
Thus, the orthogonal compliment of $\mathrm{span}\{\pd^y_i\}_{i=1}^n$ is $\mathrm{span}\{\sigma\X_y,\nu_x-kdw\}$. On the other hand, one easily computes
\bann
\inner{\pd^x_i-2\inner{\pd^x_i}{w}w}{\nu_x-kdw}=0
\eann
for each $i$, and, for $\sigma\neq 0$,
\bann
\inner{\pd^x_i-2\inner{\pd^x_i}{w}w}{\X_y}=0\,.
\eann
Thus, $\mathrm{span}\{\pd^x_i-2\inner{\pd^x_i}{w}w\}^\perp=\mathrm{span}\{\sigma\X_y,\nu_x-kdw\}$. The claim follows.
\end{proof}

Thus, without loss of generality, we may assume
\bann
\pd_i^y=\pd_i^x-2\inner{\pd_i^x}{w}w\,
\eann
at $(x_0,y_0,t_0)$. Note also that, when $\sigma\neq 0$,
\bann
\frac{2}{d^2}\left.\inner{\X_x}{\nu_x-kdw}\right|_{(x_{0},y_{0},t_0)}=\frac{2}{d^2}\left.\inner{\X_x-\X_y}{\nu_x-kdw}\right|_{(x_{0},y_{0},t_0)}=-k(x_0,y_0,t_0)\,.
\eann
Finally, observe that \eqref{eq:Dk} implies
\bann
\frac{2}{d^2}\inner{dw}{\pd^x_i}=R_i{}^p\pd_{x^p}k\,.
\eann
Using these observations, and the vanishing of $\pd_{y^i}k$, we obtain
\ba
\hspace{-4pt}\left(\pd_t-{\widehat\eL}\,\right)k={}& \lb|\xA|^2_F-\sigma \tr(\dot F_x)\rb k+2\sigma F_x+2\dot F^{ij}_x\pd_{x^i}k\,R_j{}^p\pd_{x^p}k+\frac{2}{d^2} \Big\{\yF-\xF\nonumber\\
&+\dot F^{ij}_x\big[(k\delta_{ij}-\xA_{ij})-2{\Lambda_i}^p(k\delta_{pj}-\xA_{pj}) +{\Lambda_i}^p{\Lambda_j}^q(k\delta_{pq}-\yA_{pq})\big]\Big\}\label{eq:a}
\ea
at any off-diagonal extremum $(x_0,y_0,t_0)$, where we have defined $R:=(\W^x-kI)^{-1}$ with $I$ denoting the identity.

Applying the inequalities \eqref{eq:lowersupport}, we obtain
\bann
0\geq{}& (\pd_t-\widehat\eL)(k-\phi)\\
\geq{}&-(\pd_t-\eL)\phi+\lb |\W^x|_F^2-\sigma \tr(\dot F_x)\rb k+2\sigma F_x+2\dot F^{ij}_x\pd_ik\,R_j{}^p\pd_pk\\
&+ \frac{2}{d^2}\Big\{F_y-F_x+\dot F^{ij}_x\Big[(k\delta_{ij}-\xW_{ij}) -2{\Lambda_i}^p(k\delta_{pj}-\xW_{pj})+{\Lambda_i}^p{\Lambda_j}^q(k\delta_{pq}-\yW_{pq})\Big]\Big\}\,.
\eann
It remains to demonstrate non-negativity of the term on the second line for some choice of the matrix $\Lambda$. Since we are free to choose the orthonormal basis at $y_0$ such that $\W$ is diagonalised, this follows from the following proposition.

\begin{prop}\label{prop:interiorestimate}
Let $f:\Gamma_+\to \R$ be a smooth, symmetric function which is monotone increasing in each variable and inverse-concave, and let $F:\mathcal{C}_+\to \R$ be the function defined on the cone $\mathcal{C}_+$ of positive definite symmetric matrices by $F(A)=f(\lambda(A))$, where $\lambda$ denotes the eigenvalue map. Then for any $k\in \R$, any diagonal $B\in \mathcal{C}_+$, and any $A\in \mathcal{C}_+$ with $k<\min_i\{\lambda_i(A)\}$, we have
\bann
0\leq F(B)-F(A)+\dot F^{ij}(A)\sup_{\Lambda}\Big[(k\delta_{ij}-A_{ij}) -2{\Lambda_i}^p(k\delta_{pj}-A_{pj})+{\Lambda_i}^p{\Lambda_j}^q(k\delta_{pq}-B_{pq})\Big]\,.
\eann
\end{prop}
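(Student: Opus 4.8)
The plan is to carry out the supremum over $\Lambda$ explicitly, since the bracketed expression is quadratic in $\Lambda$ with leading coefficient $\dot F^{ij}(A)(k\delta_{pq}-B_{pq})$, which is \emph{negative definite}: indeed $\dot F(A)>0$ because $f$ is monotone, and $B\succ kI$ because $B\in\mathcal C_+$ and $k<\min_i\lambda_i(A)$ forces $k<\lambda_i(A)$, but we actually need $k<\lambda_i(B)$ — this is where inverse-concavity will enter, so let me defer the sign issue and first treat it formally. Completing the square in $\Lambda$, the optimal choice satisfies ${\Lambda_j}^q(k\delta_{pq}-B_{pq})=(k\delta_{pj}-A_{pj})$ (contracted appropriately against $\dot F(A)$), i.e. $\Lambda = (A-kI)(B-kI)^{-1}$ after diagonalising; substituting back collapses the three terms into $\dot F^{ij}(A)\big[(k\delta_{ij}-A_{ij})-(A-kI)(B-kI)^{-1}(A-kI)\big]_{ij}$, or more symmetrically $-\dot F^{ij}(A)\,(A-kI)_{ip}\big[(B-kI)^{-1}\big]^{pq}(A-kI)_{qj}$ together with the $(k\delta_{ij}-A_{ij})$ piece; wait, let me keep the two pieces: the supremum equals $\dot F^{ij}(A)(k\delta_{ij}-A_{ij}) - \dot F^{ij}(A)(A-kI)_{ip}(B-kI)^{-1}_{pq}(A-kI)_{qj}$... actually the sign works out so that after the substitution we obtain $\dot F^{ij}(A)\big[(k\delta_{ij}-A_{ij}) + ((A-kI)(B-kI)^{-1}(A-kI))_{ij} - 2((A-kI)(B-kI)^{-1}(A-kI))_{ij}\big]$, but since $\Lambda$ diagonalises together with $A$ (as $B$ is already diagonal and the first-order condition forces $\Lambda$ diagonal), everything reduces to a sum over eigenvalues. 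Writing $a_i=\lambda_i(A)$, $b_i=\lambda_i(B)$, and $\dot f^i=\partial f/\partial z_i$ evaluated at $\lambda(A)$, the claimed inequality becomes
\bann
0\le f(b)-f(a)+\sum_i \dot f^i(a)\left[(k-a_i) - \frac{(k-a_i)^2}{k-b_i}\right] = f(b)-f(a)+\sum_i \dot f^i(a)\,\frac{(a_i-b_i)(k-a_i)}{b_i-k}\,.
\eann

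Next I would reduce to the case $k=0$. Since $f$ and hence $\dot f^i$ are degree-one and degree-zero homogeneous, and the cone condition guarantees $a_i>k$ for all $i$, one can check the inequality is not literally scale-invariant in $k$, so instead I would use inverse-concavity directly. Recall $f_\ast(z_1^{-1},\dots,z_n^{-1})=f(z)^{-1}$ is concave on $\Omega_\ast$; equivalently, the function $g(z):=f(z^{-1})^{-1}$ is concave and monotone on $\Gamma_+$ (with $z^{-1}$ meaning componentwise inverse). Concavity of $g$ gives the gradient inequality $g(p)\le g(q)+\sum_i \dot g^i(q)(p_i-q_i)$ for all $p,q\in\Gamma_+$. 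I would substitute $q_i = 1/a_i$, $p_i=1/b_i$ (noting $a,b\in\Gamma_+$ so $q,p\in\Gamma_+$), and translate back: $g(q)=1/f(a)$, $g(p)=1/f(b)$, and by the chain rule $\dot g^i(q) = \dot f^i(a)\,a_i^2/f(a)^2$. This yields
\bann
\frac{1}{f(b)}\le \frac{1}{f(a)}+\sum_i \frac{\dot f^i(a)\,a_i^2}{f(a)^2}\left(\frac{1}{b_i}-\frac{1}{a_i}\right) = \frac{1}{f(a)}+\sum_i \frac{\dot f^i(a)\,a_i}{f(a)^2}\cdot\frac{a_i-b_i}{b_i}\,.
\eann
Multiplying through by $f(a)f(b)>0$ and using Euler's relation $\sum_i \dot f^i(a)a_i=f(a)$ gives $f(a)\le f(b)+\sum_i \dot f^i(a)\frac{a_i(a_i-b_i)}{b_i}\cdot\frac{f(b)}{f(a)}$, which is the $k=0$ case up to the factor $f(b)/f(a)$ — so I'd need to be more careful; the cleanest route is to not clear denominators prematurely but instead combine the inverse-concavity gradient inequality with a second application for a general $k$.

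The honest strategy for general $k$: apply the concavity (gradient) inequality for $g$ at the points $q_i=1/(a_i-k)$ and $p_i=1/(b_i-k)$. This requires $a_i-k>0$ (given) and $b_i-k>0$; the latter must be established — and here is the \textbf{main obstacle}: a priori $B\succ kI$ is not assumed, only $A\succ kI$. However, if some $b_i\le k$ then $k-b_i\ge 0$ and the term $\frac{(a_i-b_i)(k-a_i)}{b_i-k}$ in the desired inequality has sign $(+)(-)/(\le 0)\ge 0$, contributing favourably, while one still needs to control $f(b)-f(a)$; I expect that monotonicity of $f$ together with a separate elementary estimate handles the "bad" indices, and inverse-concavity handles the indices with $b_i>k$. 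So the proof splits: partition $\{1,\dots,n\}$ according to the sign of $b_i-k$, apply the inverse-concavity gradient inequality to the subsystem where $b_i>k$ after the shift $z\mapsto z-k\mathbf 1$ (using that $f$ restricted to a shifted cone inherits the needed structure, or more simply that the gradient inequality for the concave function $g$ only involves the relevant coordinates), and bound the remaining terms using monotonicity $\dot f^i\ge 0$ and $f(b)-f(a)$. Combining the shifted gradient inequality with Euler's identity in the form $\sum_i \dot f^i(a)(a_i-k)=f(a)-k\,\tr(\dot f(a))$ produces exactly the required bound; the degree-one homogeneity is used to handle the $\tr(\dot F)$ bookkeeping. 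I would present the clean case $B\succ kI$ in detail and remark that the general case follows since dropping to the subcone only improves the estimate.
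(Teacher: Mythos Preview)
Two genuine gaps. First, the ``main obstacle'' you identify is not an obstacle: if some eigenvalue $b_p\le k$, the quadratic form $\dot F^{ij}(A)\Lambda_i{}^p\Lambda_j{}^q(k\delta_{pq}-B_{pq})$ ceases to be negative definite in $\Lambda$, and sending the entries $\Lambda_i{}^p\to\infty$ for that $p$ drives the bracketed expression to $+\infty$. The supremum is then $+\infty$ and the inequality is vacuous. So one may assume $B>kI$ throughout; the optimiser $\Lambda=(A-kI)(B-kI)^{-1}$ is then a genuine maximum, and your partition scheme is unnecessary.

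Second, and more seriously, your first-order route via the gradient inequality for $g=f_\ast$ does not close. Evaluating that inequality at the shifted points $q_i=1/(a_i-k)$, $p_i=1/(b_i-k)$ returns $g(q)=1/f(a-k\mathbf 1)$ and $g(p)=1/f(b-k\mathbf 1)$, which involve $f$ at the \emph{shifted} arguments, not $f(a)$ and $f(b)$; this cannot be massaged into the target. (Your $k=0$ calculation was in fact one algebraic step from working: the gradient inequality yields $\sum_i\dot f^i(a)a_i^2/b_i\ge f(a)^2/f(b)$, whence $q_a(b)=f(b)-2f(a)+\sum_i\dot f^i(a)a_i^2/b_i\ge (f(b)-f(a))^2/f(b)\ge 0$. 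But you did not complete this, and the shift does not extend it to $k\ne 0$ for the reason above.) The paper proceeds differently: it regards
\[
q_a(z):=f(z)-f(a)-\sum_i\dot f^i(a)\Big[(a_i-k)-\frac{(a_i-k)^2}{z_i-k}\Big]
\]
as a function of $z$, notes that $z=a$ is a critical point with $q_a(a)=0$, and computes
\[
\ddot q_a^{ij}+\frac{2\,\dot q_a^i}{z_i-k}\,\delta^{ij}
=\ddot f^{ij}+\frac{2\,\dot f^i}{z_i-k}\,\delta^{ij}
\ge \ddot f^{ij}+\frac{2\,\dot f^i}{z_i}\,\delta^{ij}\ge 0,
\]
the last step being the \emph{second-order} characterisation of inverse-concavity. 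Under $w_i=1/(z_i-k)$ this says $w\mapsto q_a(k+1/w)$ is convex with a critical zero at $w_i=1/(a_i-k)$, hence $q_a\ge 0$ on $\{z_i>k\}$. Using the Hessian-level criterion for inverse-concavity, rather than the gradient inequality for $f_\ast$, is the missing idea.
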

\begin{proof}[Proof of Proposition \ref{prop:interiorestimate}]
Since the expression in the square brackets is quadratic in $\Lambda$, it is easy to see that the supremum is attained with the choice $\Lambda=(A-kI)\cdot(B-kI)^{-1}$, where $I$ denotes the identity matrix. 
Thus, given any $A\in \mathcal{C}_+$, we need to show that
\bann
0\leq Q_A(B):=F(B)-F(A)-\dot F^{ij}(A)\lb{(A-kI)}_{ij}-\lsb(A-kI)\cdot(B-kI)^{-1}\cdot(A-kI)\rsb_{ij}\rb\,.
\eann
Since $B$ is diagonal, and the expression $Q_A(B)$ is invariant under similarity transformations with respect to $A$, we may diagonalise $A$ to obtain
\bann
Q_A(B):={}&f(b)- f(a)-\dot f^{i}(a)\left[(a_i-k)-\frac{(a_i-k)^2}{b_i-k}\right]\,,
\eann
where we have set $a=\lambda(A)$ and $b=\lambda(B)$. We are led to consider the function $q_a$ defined on $\Gamma_+$ by 
\bann
q_a(z):=f(z)-f(a)-\dot f^i(a)\left[(a_i-k)-\frac{(a_i-k)^2}{z_i-k}\right]\,.
\eann
We compute
\bann
\dot q_a^i={}&\dot f^i-\dot f^{i}(a)\frac{(a_i-k)^2}{(z_i-k)^2}\,,
\eann
and
\bann
\ddot q_a^{ij}={}&\ddot f^{ij}+2\dot f^{i}(a)\frac{(a_i-k)^2}{(z_i-k)^3}\delta^{ij}=\ddot f^{ij}+2\frac{\dot f^i\delta^{ij}}{z_i-k}-2\frac{\dot q_a^i\delta^{ij}}{z_i-k}\,.
\eann
It follows that
\ba\label{eq:inverse-concave1}
\ddot q_a^{ij}+2\frac{\dot q_a^i\delta^{ij}}{z_i-k}={}&\ddot f^{ij}+2\frac{\dot f^i\delta^{ij}}{z_i-k}> \ddot f^{ij}+2\frac{\dot f^i\delta^{ij}}{z_i}\geq 0\,,
\ea
where the last inequality follows from inverse-concavity of $f$ \cite[Corollary 5.4]{An07}. 
Thus the minimum of $q$ is attained at the point $z=a$, where it vanishes. This completes the proof.
\end{proof}
This completes the proof in the interior case.\\

\noindent\textbf{The boundary case} (Cf. \cite[Theorem 3.2]{An07}).\\

We now consider the case that $\inf_{\M}k(x_0,\cdot,t_0)$ occurs on the boundary of $\widehat M$; that is, $\underline k(x_{0},t_{0})=\A_{(x_0,t_0)}(y_0,y_0)$ for some $y_{0}\in S_{(x_{0},t_{0})}\M$. Consider the function $K$ defined on $T\M\times[0,T)$ by $K(x,y,t)=\A_{(x,t)}(y,y)$. 
Then the function $\Phi(x,y,t):=\phi(x,t)g_{(x,t)}(y,y)$ is a lower support for $K$ at $(x_0,y_0,t_0)$. 
In particular, 
\ba\label{eq:Lowersupport}
\begin{split}
\pd_t(K-\Phi)\leq{}& 0\\
\mbox{and}\quad \widehat \eL (K-\Phi)\geq{}& 0
\end{split}
\ea 
at $(x_0,y_0,t_0)$ for any elliptic operator $\widehat\eL$ on $T\M$. We require the operator project to $\eL$ on the first factor (at least at the point $(x_0,y_0,t_0)$), which leads us to consider an operator $\widehat \eL$ locally of the form $\widehat\eL=\dot F^{ij}_x({\pd_x^i-{\Lambda_i}^p\pd_{y^p}})({\pd_x^j-{\Lambda_j}^q\pd_{y^q}})$, where $\{x^i,y^i\}_{i=1}^n$ are co\"ordinates for $TM$ near $(x_0,y_0)$. We choose these co\"ordinates such that $\{x^i\}_{i=1}^n$ are normal co\"ordinates on $M$ (with respect to $g_{t_0}$) based at $x_0$, and $\{y^i\}_{i=1}^n$ are the corresponding fibre co\"ordinates (defined by $(x,y)=(x,y^i\pd_{x^i})$ for tangent vectors $(x,y)$ near $(x_0,y_0)$). Moreover, we may assume that $\{\pd_{x^i}|_{x_0}\}_{i=1}^n$ is a basis of eigenvectors of $\W_{(x_0,t_0)}$ with $y_0=\pd_{x^1}|_{x_0}$. 

Writing locally $K-\Phi=y^ky^l(\A_{kl}-g_{kl})$, we find
\bann
(\pd_{x^i}-{\Lambda_i}^p\pd_{y^p})(K-\Phi)=y^ky^l\lb\pd_{x^i}\A_{kl}-\pd_{x^i}\phi\, g_{kl}\rb-2{\Lambda_i}^py^k\lb\A_{kp}-\phi\, g_{kp}\rb\,.
\eann
Thus, at the point $(x_0,y_0,t_0)$, we obtain
\bann
0=(\pd_{x^i}-{\Lambda_i}^p\pd_{y^p})(K-\Phi)=\cd_{i}\A_{11}-\cd_{i}\phi\,.
\eann
We next compute
\bann
(\pd_{x^i}-{\Lambda_i}^p\pd_{y^p})(\pd_{x^j}-{\Lambda_j}^q\pd_{y^q})(K-\Phi)={}&y^ky^l\big(\pd_{x^i}\pd_{x^j}\A_{kl} -\pd_{x^i}\pd_{x^j}\phi\,g_{kl}-\pd_{x^i}\phi\,\pd_{x^j}g_{kl}\\
{}&-\pd_{x^j}\phi\,\pd_{x^i}g_{kl}-\phi\,\pd_{x^i}\pd_{x^j}g_{kl}\big)\\
{}&-2{\Lambda_j}^qy^k\lb\pd_{x^i}\A_{kq}-\pd_{x^i}\phi\,g_{kq}-\phi\,\pd_{x^i}g_{kp}\rb\\
{}&-2{\Lambda_i}^py^k\lb\pd_{x^j}\A_{kp}-\pd_{x^j}\phi\,g_{kp}-\phi\,\pd_{x^j}g_{kp}\rb\\
{}&+2{\Lambda_i}^p{\Lambda_j}^q\lb\A_{pq}-\phi\,g_{pq}\rb\,.
\eann
At the point $(x_0,y_0,t_0)$, we obtain
\ba\label{eq:Phixx}
\widehat\eL(K-\Phi)={}&\eL\A_{11}-\eL\phi\nonumber\\
{}&-2\dot F^{ij}\big[2{\Lambda_i}^p\lb\cd_j\A_{1p}-\cd_j\A_{11}\delta_{1p}\rb -{\Lambda_i}^p{\Lambda_j}^q(\A_{pq}-\A_{11}\delta_{pq})\big]\,.
\ea
Finally, we compute the time derivative
\bann
\pd_t(K-\Phi)=y^ky^l\big(\pd_t\A_{kl}-\pd_t\phi\,g_{kl}-\phi\,\pd_tg_{kl}\big)\,,
\eann
which at $(x_0,y_0,t_0)$ becomes
\ba\label{eq:Phit}
\pd_t(K-\Phi)=\pd_t\A_{11}-\A_{11}\pd_tg_{11}-\pd_t\phi\,.
\ea
Let us recall the evolution equations for $\A$ and $g$ \cite{An94,An94b}:
\ba
\pd_t\A_{ij}={}&\eL\A_{ij}+\ddot F^{pq,rs}\cd_i\A_{pq}\cd_j\A_{rs}-2\A^2_{ij}F+\lb\A_{ij}-\sigma\tr(\dot F)\rb|\W|^2_F+2\sigma\A_{ij}\,,\label{eq:evh}
\ea
and
\ba
\pd_tg_{ij}={}& -2F\A_{ij}\,.\label{eq:evg}
\ea
Putting \eqref{eq:Phixx} and \eqref{eq:Phit} together, and applying the evolution equations \eqref{eq:evh} and \eqref{eq:evg}, and the inequalities \eqref{eq:Lowersupport}, we obtain
\ba\label{eq:lastterm}
0\geq (\pd_t-\eL)(K-\Phi)={}&-(\pd_t-\eL)\phi+(|\W|_F^2-\sigma\tr(\dot F))\phi+2\sigma F+\ddot F^{pq,rs}\cd_1\A_{pq}\cd_1\A_{rs}\nonumber\\
{}&+2\dot F^{ij}{\Lambda_i}^p\Big[2\lb\cd_j\A_{1p}-\cd_j\A_{11}\delta_{1p}\rb -{\Lambda_j}^q(\A_{pq}-\A_{11}\delta_{pq})\Big]
\ea
at the point $(x_0,y_0,t_0)$. Note that the term in the last line with $p=1$ vanishes. 

Using a trick of Brendle \cite[Proposition 8]{Br12} (see also \cite[Theorem 7]{AnLi12}) we also obtain $\cd_1\W_{11}=0$ at the point $(x_0,t_0)$:

\begin{lem}\label{lem:gradW} 
$\cd_1\W_{11}$ vanishes at $(x_0,y_0,t_0)$.
\end{lem}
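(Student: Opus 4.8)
The plan is to read off $\cd_1\W_{11}=0$ as the first-derivative condition for an interior minimum of a smooth function of one real variable; this is the essence of Brendle's trick. The key observation is that, in the boundary case, the value $\underline k(x_0,t_0)$ is approached by $k(x_0,\,\cdot\,,t_0)$ along the $g_{t_0}$-geodesic through $x_0$ in the direction $y_0$ --- and from \emph{both} sides --- while along such a geodesic the apparent singularity of $k$ on the diagonal is removable.

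Concretely, let $\gamma\colon(-\delta,\delta)\to\M$ be the geodesic of $(\M,g_{t_0})$ with $\gamma(0)=x_0$ and $\dot\gamma(0)=y_0=\pd_{x^1}|_{x_0}$, and set $g(s):=k(x_0,\gamma(s),t_0)$ for $0<|s|<\delta$. The first step is to check that $g$ extends to a smooth function on $(-\delta,\delta)$ with $g(0)=\W_{(x_0,t_0)}(y_0,y_0)$. Since $\X(x_0,t_0)-\X(\gamma(s),t_0)=-s\,\X_*\dot\gamma(0)+O(s^2)$, and $\X_*\dot\gamma(0)$ is tangent to the hypersurface (hence orthogonal to $\nu(x_0,t_0)$) and tangent to $N_\sigma^{n+1}$ (hence orthogonal to $\X(x_0,t_0)$), both the numerator $2\inner{\X(x_0,t_0)-\X(\gamma(s),t_0)}{\nu(x_0,t_0)}$ and the denominator $\norm{\X(x_0,t_0)-\X(\gamma(s),t_0)}^2$ of $k$ vanish to order at least two in $s$, the denominator with leading coefficient $s^2$; dividing out this common factor presents $g$ as a ratio of smooth functions with non-vanishing denominator at $s=0$, whose value there is $\W_{(x_0,t_0)}(y_0,y_0)$. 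By the boundary-case hypothesis this equals $\underline k(x_0,t_0)$.

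Since $\gamma(s)\neq x_0$ for $0<|s|<\delta$, the definition of $\underline k$ as an infimum over $\M\setminus\{x_0\}$ gives $g(s)\geq\underline k(x_0,t_0)=g(0)$ throughout $(-\delta,\delta)$; hence $s=0$ is an interior minimum of the smooth function $g$, so $g'(0)=0$. On the other hand, carrying the expansion of $\X\circ\gamma$ one order further --- using the Gauss and Weingarten relations for $\M\hookrightarrow N_\sigma^{n+1}$, the Codazzi identity, and the flatness of the linear ambient space ($\R^{n+1}$, $\R^{n+2}$, or $\R^{n+1,1}$) to evaluate $D^2\X$ and $D^3\X$ --- one finds $g(s)=\underline k(x_0,t_0)+\frac13\,s\,(\cd_{y_0}\W)(y_0,y_0)+O(s^2)$, uniformly in $\sigma$. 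In the chosen coordinates $(\cd_{y_0}\W)(y_0,y_0)=\cd_1\W_{11}$, so $g'(0)=\frac13\cd_1\W_{11}$, and comparison with $g'(0)=0$ proves the lemma.

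The only step requiring care is this third-order expansion: one must check that the $\sigma$-dependent pieces of $D^2\X$ and $D^3\X$ --- which come from the second fundamental form of $N_\sigma^{n+1}$ inside the flat ambient space and are proportional to $\X(x_0,t_0)$ and to $\X_*\dot\gamma(0)$ --- are orthogonal to $\nu(x_0,t_0)$, so that they affect neither the value of $g$ at $s=0$ nor its first derivative there, and the calculation reduces to the Euclidean one. Everything else is formal.
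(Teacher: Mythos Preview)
Your proof is correct and uses essentially the same idea as the paper: restrict to the geodesic $\gamma(s)=\exp_{x_0}(sy_0)$ and exploit the two-sided inequality there. The only cosmetic difference is that the paper works directly with the manifestly smooth function $f(s):=d^2\,(k-\kappa_1)\big|_{(x_0,\gamma(s),t_0)}$, which is non-negative with $f(0)=f'(0)=f''(0)=0$, and concludes $f'''(0)=2\cd_1\W_{11}=0$; this is equivalent to your formulation via $f'''(0)=6g'(0)$, and spares you the removable-singularity check.
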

\begin{proof}[Sketch proof of Lemma \ref{lem:gradW}] 
Since $\kappa_1(x_0,t_0)=\inf_{y\neq x_0} k(x_0,y,t_0)$, we have
\bann
0\leq Z(x_0,y,t_0):= 2\inner{\X(x_0,t_0)-\X(y,t_0)}{\nu(x_0,t_0)}-\kappa_1(x_0,t_0)\norm{\X(x_0,t_0)-\X(y,t_0)}^2
\eann
for all $y\in \M$. In particular, $0\leq f(s):=Z(x_0,\gamma(s),t_0)$ for all $s$, where $\gamma(s):=\exp_{x_0}sy_0$. It is straightforward to compute $0=f(0)=f'(0)=f''(0)$, which, since $f\geq 0$, implies that $f'''(0)=0$. 
But a further straightforward computation yields $f'''(0)=2\cd_{1}\A_{11}$. 
\end{proof}

Applying the following proposition to \eqref{eq:lastterm} completes the proof.

\begin{prop}\label{prop:boundaryestimate}
Let $f:\Gamma_+\to \R$ be a smooth, symmetric function which is monotone increasing in each variable and inverse-concave, and let $F:\mathcal{C}_+\to \R$ be the function defined on the cone $\mathcal{C}_+$ of positive definite symmetric matrices by $F(A)=f(\lambda(A))$, where $\lambda$ denotes the eigenvalue map. If $A\in \mathcal{C}_+$ and $y$ is an eigenvector of $A$ corresponding to its smallest eigenvalue, then, for any totally symmetric 3-tensor $T$ with $T(y,y,y)=0$, we have
\bann
0 \leq{}& y^iy^j\ddot F^{pq,rs}T_{ipq}T_{jrs}+2\dot F^{kl}\sup_{\Lambda}\Big[2{\Lambda_k}^py^i\lb T_{ilp}-y^ry^sT_{lrs}\delta_{iq}\rb-{\Lambda_k}^p{\Lambda_l}^q(A_{pq}-y^ry^sA_{rs}\delta_{pq})\Big]
\eann
at the matrix $A$. Moreover, equality holds only if $T(v,y,y)=0$ for all $v\in \R^n$.
\end{prop}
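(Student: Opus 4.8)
The plan is to follow the pattern of the proof of Proposition~\ref{prop:interiorestimate}: diagonalise, optimise over $\Lambda$, and reduce to the inverse-concavity inequalities for $f$ at $\lambda(A)$. First I would diagonalise $A$ and, after rotating the eigenbasis within the eigenspace of the smallest eigenvalue if necessary, arrange that $A$ is diagonal with entries $\lambda_1\le\dots\le\lambda_n$ and $y=e_1$. Writing $k:=\lambda_1=A_{11}$ and $V_{pq}:=T_{1pq}$ (a symmetric matrix, by total symmetry of $T$, with $V_{11}=T(y,y,y)=0$), one has $\dot F^{ij}=\dot f^{\,i}\delta^{ij}$ in this basis, $A_{pq}-y^ry^sA_{rs}\delta_{pq}=(\lambda_p-k)\delta_{pq}$, and $y^iy^j\ddot F^{pq,rs}T_{ipq}T_{jrs}=\ddot F^{pq,rs}V_{pq}V_{rs}=:\ddot F(V,V)$, so the claimed inequality becomes a statement purely about $\dot f$, $\ddot f$ at $\lambda(A)$.

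Second, I would carry out the optimisation over $\Lambda$. As in Proposition~\ref{prop:interiorestimate} the bracket is quadratic in $\Lambda$, with quadratic part $-\dot F^{ij}{\Lambda_i}^p(\lambda_p-k)\delta_{pq}{\Lambda_j}^q=-\tr\big(\dot F\Lambda(A-kI)\Lambda^{T}\big)\le0$, since $A-kI\succeq0$ and $\dot F\succ0$; hence the supremum over $\Lambda$ is either $+\infty$---in which case the asserted inequality is trivial and the equality clause vacuous---or it is attained. It is finite precisely when the linear part of the bracket vanishes on those $\Lambda$ all of whose rows lie in $\ker(A-kI)$; writing $E=\{p:\lambda_p=\lambda_1\}$ and using $\dot f^{\,i}>0$ together with $T_{1m1}=T_{m11}$, this is equivalent to $T_{1mp}=0$ for every $m$ and every $p\in E\setminus\{1\}$. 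Together with $V_{11}=0$ this means $V$ is supported on indices in $\{1\}\cup E^{c}$ and has vanishing $(1,1)$-entry. In that case the maximiser is ${\Lambda_m}^p=V_{mp}/(\lambda_p-k)$ for $p\notin E$ (the $\delta_{1p}$-term dropping out, as then $p\ne1$), the other entries of $\Lambda$ being immaterial, and substitution reduces the problem to proving
\[
0\le \ddot F(V,V)+2\sum_{p\notin E}\sum_{m}\dot f^{\,m}\,\frac{V_{mp}^{2}}{\lambda_p-k}\;=\;\ddot F(V,V)+2\,\dot F^{ij}\big(V(A-kI)^{+}V\big)_{ij}\,.
\]

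Third, I would establish this inequality by separating the diagonal and off-diagonal entries of $V$. Using $\ddot F(V,V)=\sum_{p,q}\ddot f^{pq}V_{pp}V_{qq}+2\sum_{p<q}\tfrac{\dot f^{\,p}-\dot f^{\,q}}{\lambda_p-\lambda_q}V_{pq}^{2}$ (with the usual limiting value when $\lambda_p=\lambda_q$) and the support of $V$, the part of the expression involving the diagonal entries is the quadratic form of the matrix $\big[\ddot f^{pq}+2\tfrac{\dot f^{\,p}}{\lambda_p-k}\delta^{pq}\big]$ restricted to indices $p,q\notin E$; since $A\in\mathcal C_+$ gives $\lambda_1>0$, and hence $\tfrac1{\lambda_p-k}>\tfrac1{\lambda_p}$, this is the sum of a strictly positive diagonal matrix and a principal submatrix of $\big[\ddot f^{pq}+2\tfrac{\dot f^{\,p}}{\lambda_p}\delta^{pq}\big]$, the latter being nonnegative by inverse-concavity \cite[Corollary~5.4]{An07} (the estimate already used in \eqref{eq:inverse-concave1}); thus the diagonal part is a \emph{positive definite} form in $\{V_{pp}:p\notin E\}$. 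The coefficient of $V_{pq}^{2}$ for $p<q$ (both necessarily in $E^{c}$, as otherwise $V_{pq}=0$) is $2\big(\tfrac{\dot f^{\,p}-\dot f^{\,q}}{\lambda_p-\lambda_q}+\tfrac{\dot f^{\,p}}{\lambda_q-k}+\tfrac{\dot f^{\,q}}{\lambda_p-k}\big)$; when $p=1$ this collapses to $2\dot f^{\,q}/(\lambda_q-\lambda_1)>0$, and when $1<p<q$ it dominates $2\big(\tfrac{\dot f^{\,p}-\dot f^{\,q}}{\lambda_p-\lambda_q}+\tfrac{\dot f^{\,p}}{\lambda_q}+\tfrac{\dot f^{\,q}}{\lambda_p}\big)\ge0$, using $\lambda_1>0$ and inverse-concavity of $f$ (cf.\ \cite{An07}). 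Summing these nonnegative contributions gives the inequality. For the equality clause (with the supremum finite), vanishing of the whole expression forces the positive definite diagonal form to vanish, so all $V_{pp}=0$, and forces each strictly positive term with $p=1$ to vanish, so $V_{1q}=0$ for all $q$; hence the first column of $V$ vanishes, i.e.\ $T(v,y,y)=v^iT_{i11}=0$ for all $v$.

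I expect the main obstacle to be the off-diagonal estimate: unlike the interior case, where $q_a$ depends only on the diagonal entries, here $V=T_{1\bullet\bullet}$ is a genuine non-diagonal symmetric matrix, so one needs the full inverse-concavity inequality---in particular the fact that $\tfrac{\dot f^{\,p}-\dot f^{\,q}}{\lambda_p-\lambda_q}+\tfrac{\dot f^{\,p}}{\lambda_q}+\tfrac{\dot f^{\,q}}{\lambda_p}\ge0$ for inverse-concave $f$---rather than only the diagonal corollary invoked in \eqref{eq:inverse-concave1}; this is obtained from the matrix form of inverse-concavity by testing with a symmetric matrix having a single off-diagonal pair of nonzero entries. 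A secondary point needing care is the bookkeeping when $\lambda_1$ is not simple: one must check that finiteness of $\sup_\Lambda$ encodes exactly the vanishing of the relevant components of $V$, and that the degenerate directions of $A-kI$ make no contribution to the optimised bracket.
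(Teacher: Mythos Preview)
Your proposal is correct and follows essentially the same route as the paper: diagonalise $A$ with $y=e_1$, optimise the bracket over $\Lambda$, expand $\ddot F(V,V)$ in terms of $f$ via the standard formula, and split into diagonal and off-diagonal contributions, each controlled by the inverse-concavity inequalities $\ddot f^{pq}+2\dot f^{p}\lambda_p^{-1}\delta^{pq}\ge 0$ and $\frac{\dot f^{p}-\dot f^{q}}{\lambda_p-\lambda_q}+\frac{\dot f^{p}}{\lambda_q}+\frac{\dot f^{q}}{\lambda_p}\ge 0$ from \cite{An07}. The one substantive difference is in the treatment of repeated eigenvalues: the paper first reduces to the case of distinct eigenvalues by noting that $\sup_\Lambda Q$ is upper semi-continuous in $A$, whereas you treat the general case directly by observing that the supremum is $+\infty$ (so the inequality is trivial and the equality clause vacuous) unless the components $T_{1mp}$ with $p\in E\setminus\{1\}$ vanish. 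Both are valid; your variant has the mild advantage that the equality characterisation comes out directly without having to revisit the approximation. One small wording slip: your parenthetical ``both necessarily in $E^{c}$'' is not literally correct, since $p=1\in E$ is allowed---but you immediately handle that case separately, so the argument is unaffected.
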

\begin{proof}[Proof of Proposition \ref{prop:boundaryestimate}]
We first observe that it suffices to prove the claim for those $A\in\mathcal{C}_+$ having distinct eigenvalues: The expression
$$
Q:=2\dot F^{kl}\Big[2{\Lambda_k}^py^i\lb T_{ilp}-y^ry^sT_{lrs}\delta_{iq}\rb-{\Lambda_k}^p{\Lambda_l}^q(A_{pq}-y^ry^sA_{rs}\delta_{pq})\Big]
$$
is continuous in $A$, and hence the supremum over $\Lambda$ is upper semi-continuous in $A$; so the general case follows by taking a sequence of matrices $A^{(k)}\in \mathcal{C}_+$ approaching $A$ with each $A^{(k)}$ having distinct eigenvalues.

So suppose that $A$ has distinct eigenvalues and let $\{e_i\}_{i=1}^n$ be an orthonormal frame of eigenvectors of $A$ with $e_1=y$. Then 
$$
Q=2\dot F^{kl}\Big[2{\Lambda_k}^p\lb T_{1lp}-T_{l11}\delta_{1q}\rb-{\Lambda_k}^p{\Lambda_l}^q(A_{pq}-A_{11}\delta_{pq})\Big]\,.
$$
Observe that the supremum over $\Lambda$ occurs when $\Lambda_{lq}=(\lambda_q-\lambda_1)^{-1}T_{1lq}$ for $i,p>1$. With this choice, we obtain
$$
Q=2\dot F^{kl}R^{pq}T_{1kq}T_{1lp}\,,
$$
where $R^{pq}:=(\lambda_p-\lambda_1)^{-1}\delta^{pq}$ for $p,q\neq 1$ and zero otherwise. Therefore, it suffices to prove that
\bann
0\leq \lb\ddot F^{pq,rs}+2\dot F^{pr}R^{qs}\rb B_{pq}B_{rs}
\eann
for any symmetric $B$ with $B_{11}=0$ with equality only if $B_{1q}=0$ for all $q$. The expression we want to estimate may be written in terms of the function $f$ as follows (see, for example, \cite[Theorem 5.1]{An07}):
\bann
\lb\ddot F^{pq,rs}+2\dot F^{pr}R^{qs}\rb B_{pq}B_{rs}={}&\ddot f^{pq}B_{pp}B_{qq}+\sum_{p\neq q}\frac{\dot f^p-\dot f^q}{\lambda_p-\lambda_q}B_{pq}^2+2\sum_{p=1,\,q=2}^n\frac{\dot f^p}{\lambda_q-\lambda_1}B_{pq}^2\\
={}&\ddot f^{pq}B_{pp}B_{qq}+2\sum_{p>1,\,q>1}\frac{\dot f^p\delta^{pq}}{\lambda_p-\lambda_1} B_{pp}B_{qq}+\sum_{p\neq q}\frac{\dot f^p-\dot f^q}{\lambda_p-\lambda_q}B_{pq}^2\\
{}&+2\sum_{p=2}^n\frac{\dot f^1}{\lambda_p-\lambda_1}B_{p1}^2+2\sum_{p>1,\,q>1,\atop p\neq q}\frac{\dot f^p}{\lambda_q-\lambda_1}B_{pq}^2\,.
\eann
We first estimate
\bann
\ddot f^{pq}B_{pp}B_{qq}+2\sum_{p>1,q>1}\frac{\dot f^p\delta^{pq}}{\lambda_p-\lambda_1}B_{pp}B_{qq}\geq{}&\ddot f^{pq}B_{pp}B_{qq}+2\sum_{p=2,q=2}^n\frac{\dot f^p}{\lambda_p}\,\delta^{pq}B_{pp}B_{qq}\\
={}&\lb\ddot f^{pq}+2\frac{\dot f^p}{\lambda_p}\,\delta^{pq}\rb B_{pp}B_{qq}\geq 0\,,
\eann
where the final inequality follows from inverse-concavity of $f$ \cite[Theorem 2.1]{An07}. The remaining terms are
\bann
\sum_{p\neq q}\frac{\dot f^p-\dot f^q}{\lambda_p-\lambda_q}B_{pq}^2&+2\sum_{p=2}^n\frac{\dot f^1}{\lambda_p-\lambda_1}B_{p1}^2+2\sum_{p>1,\, q>1,\atop p\neq q}\frac{\dot f^p}{\lambda_q-\lambda_1}B_{pq}^2\\
={}& \sum_{p>1,\, q>1,\atop p\neq q}\lb\frac{\dot f^p-\dot f^q}{\lambda_p-\lambda_q}+2\frac{\dot f^p}{\lambda_q-\lambda_1}\rb B_{pq}^2+2\sum_{p=2}^n\lb\frac{\dot f^p-\dot f^1}{\lambda_p-\lambda_1}+\frac{\dot f^1}{\lambda_p-\lambda_1}\rb B_{p1}^2\\
\geq{}& \sum_{p>1,\, q>1,\atop p\neq q}\lb\frac{\dot f^p-\dot f^q}{\lambda_p-\lambda_q}+\frac{\dot f^p}{\lambda_q}+\frac{\dot f^q}{\lambda_p}\rb B_{pq}^2+2\sum_{p=2}^n\lb\frac{\dot f^p}{\lambda_p-\lambda_1}\rb B_{p1}^2\,.
\eann
The first term is non-negative by inverse-concavity of $f$ \cite[Corollary 5.4]{An07} and the second term is clearly non-negative and vanishes only if $B_{1q}=0$ for all $q>1$. This completes the proof.
\end{proof}

This completes the proof that $\underline k$ is a viscosity supersolution of \eqref{eq:LF}.
\end{proof}

Since the speed function satisfies \eqref{eq:LF}, the statement of Theorem \ref{thm:NC} follows from a simple comparison argument for viscosity solutions (cf. \cite{NC,NC2}):

Define $\varphi(t):= \E^{2\sigma \eta t}\lb\inf_{M\times\{t\}}\underline k/F-1/\eta\rb$, where $\eta>0$ is such that $\sigma\eta\geq \sigma\tr(\dot F)$; that is, $\eta\geq\tr(\dot F)$ for flows in the sphere, $\eta\leq \tr(\dot F)$ for flows in hyperbolic space, and $\eta> 0$ for flows in Euclidean space. We claim that $\varphi$ is non-decreasing. It suffices to prove that $\underline k(\,\cdot\,,t)-\lb 1/\eta+\E^{-2\sigma \eta t}\varphi(t_0)-\varepsilon \E^{L(t-t_0)}\rb F(\,\cdot\,,t)> 0$ for any $t_0\in [0,T), t\in[t_0,T)$ and any $\varepsilon>0$, where we have set $L:=1-2\sigma\eta$. Taking $\varepsilon \to 0$ then gives $\underline k(\,\cdot\,,t)-\lb 1/\eta + \E^{-2\sigma \eta t}\varphi(t_0)\rb F(\,\cdot\,,t)\geq 0$; that is, $\varphi(t)\geq \varphi(t_0)$ for all $t \geq t_0$ for any $t_0$. Now, at time $t_0$ we have $\underline k(x,t_0)-\lb 1/\eta+\E^{-2\sigma \eta t_0}\varphi(t_0)-\varepsilon\rb F(x,t_0)\geq \varepsilon F(x,t_0)>0$. So suppose, contrary to the claim, that there is a point $(x_1,t_1)\in \M\times [t_0,T)$ and some $\varepsilon>0$ such that $\underline k(x_1,t_1)-\lb 1/\eta+\E^{-2\sigma\eta t_1}\varphi(t_0)-\varepsilon \E^{t_1-t_0}\rb F(x_1,t_1)=0$. Assuming that $t_1$ is the first such time, this means precisely that the function $\phi(x,t):=\lb1/\eta+\E^{-2\sigma\eta t}\varphi(t_0)-\varepsilon \E^{L(t-t_0)}\rb F(x,t)$ is a lower support for $\underline k$ at $(x_1,t_1)$. But $\underline k$ is a viscosity supersolution of \eqref{eq:LF}, so that, at the point $(x_1,t_1)$, $\phi$ satisfies
\bann
0\geq{}& -\lb\pd_t-\eL\rb\phi+\lb|\W|^2_F-\sigma \tr(\dot F)\rb\phi+2\sigma F\\
={}&\lb 2\eta \sigma\E^{-2\sigma \eta t_1}\varphi(t_0)+L\varepsilon \E^{L(t_1-t_0)}\rb F - \lb \frac{1}{\eta }+\E^{-2\sigma \eta t_1}\varphi(t_0)-\varepsilon \E^{L(t_1-t_0)}\rb \lb|\W|^2_F+\sigma \tr(\dot F)\rb\\
&+\lb \frac{1}{\eta }+\E^{-2\sigma \eta t_1}\varphi(t_0)-\varepsilon \E^{L(t_1-t_0)} \rb F\lb|\W|^2_F-\sigma \tr(\dot F)\rb+2\sigma F\\
={}& 2\E^{-2\sigma \eta t_1}\varphi(t_0)F\lb\sigma\eta-\sigma\tr(\dot F)\rb+\varepsilon \E^{L(t_1-t_0)}F\lb L+2\sigma \tr(\dot F)\rb+\frac{2}{\eta}F\lb\sigma\eta-\sigma\tr(\dot F)\rb\\
\geq{}&\varepsilon\E^{L(t_1-t_0)}>0\,,
\eann
where we used $L:=1-2\sigma\eta$, and $\sigma\eta\geq \sigma \tr(\dot F)$ in the last line. This contradiction proves that $\phi$ could not have reached zero on $[t_0,T)$, which, as explained above, proves that $\varphi$ is non-decreasing. Therefore,
\bann
\lb\frac{\underline k(x,t)}{F(x,t)}-\frac{1}{\eta}\rb \E^{2\sigma \eta t}\geq \inf_{M\times\{t\}}\lb\frac{\underline k}{F}-\frac{1}{\eta}\rb \E^{2\sigma \eta t}=:\varphi(t)\geq\varphi(0)=\inf_{M\times\{0\}}\lb\frac{\underline k}{F}-\frac{1}{\eta}\rb\,.
\eann

This proves Theorem \ref{thm:NC}.

\section{Convex solutions in Euclidean space}

We now give an application of non-collapsing to flows of convex hypersurfaces; namely, we give a new proof that convex hypersurfaces contract to round points under the flow \eqref{eq:CF} in Euclidean space when the speed is both concave and inverse-concave \cite{An07}. 

We begin with some background results on fully non-linear curvature flows \eqref{eq:CF}. Given smooth, compact initial data on which $F$ is defined, we obtain unique solutions for a short time \cite{An94}. Since we can enclose the initial hypersurface by a large sphere, which shrinks to a point in finite time, the avoidance principle (see, for example, \cite[Theorem 5]{NC}) implies that the maximal time $T$ of existence of the solution must be finite. For inverse-concave speeds, the non-collapsing estimate yields a preserved cone of curvatures for the flow, since $\kappa_{\min}\geq \underline k\geq k_0 F$. This implies that the flow is uniformly parabolic, since positive bounds for $\dot F$ on the intersection of the preserved cone with the unit sphere $\{|\W|=1\}$ extend to bounds on the entire cone. If $F$ is also a concave function, then global regularity of solutions may be obtained by appealing to the scalar parabolic theory of Krylov-Safanov \cite{KS} and Evans and Krylov \cite{E,K} (cf. \cite{surfaces,AM}). We conclude that the solution will remain smooth until $\max_{\M\times \{t\}}F\to\infty$ as $t\to T$. 

The key to our proof of the convergence theorem is showing that the normalised interior and exterior ball curvatures improve to unity at a singularity. This is achieved using a blow-up argument and applying the strong maximum principle.

\begin{thm}\label{thm:improvingNC}
Suppose $F$ is concave and inverse-concave. Then along any convex solution $X:\M^n\times [0,T)\to\R^{n+1}$ of \eqref{eq:CF} the following estimates hold:
\ben
\item For every $\varepsilon>0$ there exists $F_\varepsilon<\infty$ such that
\bann
F>\F_\varepsilon\quad\Rightarrow\quad \overline k\leq (1+\varepsilon)F\,.
\eann
\item For every $\delta>0$ there exists $F_\delta<\infty$ such that
\bann
F>\F_\delta\quad\Rightarrow\quad \underline k\geq (1-\delta)F\,.
\eann
\een
\end{thm}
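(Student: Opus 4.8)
The plan is to argue by contradiction via a blow-up, following the standard pattern for fully nonlinear curvature flows and exploiting the two-sided non-collapsing estimates already established. I will prove only statement (1), for the exterior ball curvature $\overline k$; statement (2), for $\underline k$, follows by an identical argument (with the reverse inequality), using the interior non-collapsing estimate of \cite{NC} in place of Theorem \ref{thm:NC} to obtain a uniform lower bound $\underline k\geq k_0 F$ that keeps the rescaled hypersurfaces uniformly convex.

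Suppose (1) fails. Then there is some $\varepsilon_0>0$ and a sequence of times $t_j\to T$ and points $x_j\in\M$ with $F(x_j,t_j)\to\infty$ (using that $\sup F\to\infty$ as $t\to T$, and replacing $x_j$ by points where $F$ is comparable to its maximum), at which $\overline k(x_j,t_j)\geq(1+\varepsilon_0)F(x_j,t_j)$. Set $\lambda_j:=F(x_j,t_j)\to\infty$ and consider the rescaled flows $\X_j(\cdot,s):=\lambda_j\big(\X(\cdot,t_j+\lambda_j^{-2}s)-\X(x_j,t_j)\big)$, defined for $s\in(-\lambda_j^2 t_j,0]$, so that each $\X_j$ is again a solution of \eqref{eq:CF} in $\R^{n+1}$ with $F_j(x_j,0)=1$. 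The two-sided non-collapsing estimates are scale-invariant, so $k_0 F_j\leq\underline k_j\leq\overline k_j\leq K_0 F_j$ holds uniformly along the rescaled flows; in particular the rescaled hypersurfaces are uniformly convex with principal curvatures pinched between multiples of $F_j$, so the flow is uniformly parabolic on the region where $F_j$ is bounded above and below. Using the gradient and higher-derivative estimates available from the Krylov--Safanov and Evans--Krylov theory together with the convexity, and a Harnack-type or local-estimate argument to control $F_j$ near $(x_j,0)$ from below, one passes (after passing to a subsequence and translating so the marked points converge) to a smooth limit eternal solution $\X_\infty:\M_\infty^n\times(-\infty,0]\to\R^{n+1}$ of \eqref{eq:CF}, which is convex, satisfies $F_\infty(\cdot,0)=1$ at the base point, and on which the scale-invariant inequalities $k_0 F_\infty\leq\underline k_\infty\leq\overline k_\infty\leq K_0 F_\infty$ persist.

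On the limit, $\overline k_\infty/F_\infty$ is a bounded function attaining its supremum: by construction $\overline k_\infty(\bar x,0)\geq(1+\varepsilon_0)F_\infty(\bar x,0)$ at the base point $\bar x$, while everywhere $\overline k_\infty/F_\infty\leq K_0$. Now $\overline k_\infty$ is a viscosity supersolution and $F_\infty$ an exact solution of \eqref{eq:LF} with $\sigma=0$, namely $\partial_t u=\eL u+|\W|_F^2 u$; hence $\overline k_\infty/F_\infty$ is (in the viscosity sense) a supersolution of a homogeneous linear equation of the form $\partial_t v=\eL v+\langle 2\cd\log F_\infty,\cd v\rangle_F$ — equivalently, $\overline k_\infty/F_\infty$ satisfies a differential inequality to which the strong minimum principle applies, but here it is the exterior estimate, so one instead observes that $K_0-\overline k_\infty/F_\infty$ is a nonnegative viscosity supersolution of the same homogeneous linear equation and vanishes somewhere if $K_0$ is chosen as the actual supremum. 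Applying the strong maximum principle for viscosity supersolutions (as in \cite{NC}) forces $\overline k_\infty/F_\infty$ to be constant in space and nonincreasing in time; combined with the fact that $\overline k_\infty/F_\infty\geq 1$ always and with the structure of eternal convex solutions, one concludes $\overline k_\infty\equiv F_\infty$, i.e. the limit is a shrinking sphere, contradicting $\overline k_\infty(\bar x,0)\geq(1+\varepsilon_0)F_\infty(\bar x,0)>F_\infty(\bar x,0)$.

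The main obstacle is the compactness step producing the smooth limit eternal solution: one must upgrade the uniform parabolicity and convexity into uniform $C^\infty$ bounds (via Evans--Krylov and Schauder on balls where $F_j$ is two-sided bounded), arrange the marked points and metrics to converge by a pointed-limit argument, and — crucially — prevent degeneration by showing $F_j$ stays bounded below near $(x_j,0)$ on a fixed parabolic neighbourhood, which is where a Harnack inequality for $F$ (available for concave speeds) or a direct barrier argument using the enclosed/enclosing spheres from non-collapsing enters. Once the limit exists, the strong maximum principle argument is the same one already used in \cite{NC,NC2}, so the remaining work is routine. Everything else — scale invariance of $\underline k, \overline k$, the evolution equations, and the rigidity of equality — is supplied by the results quoted above.
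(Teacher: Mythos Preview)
Your blow-up strategy is the same as the paper's, but there are two places where your argument diverges from what is actually needed, one of which is a genuine gap.

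\textbf{Compactness.} You flag the existence of the limit as ``the main obstacle'' and suggest a Harnack inequality or barrier argument to keep $F_j$ bounded below. The paper does something much simpler: the exterior non-collapsing estimate $\underline k\geq k_0F$ is scale-invariant, so at the marked point $\underline k_j(x_j,0)\geq k_0F_j(x_j,0)=k_0>0$, which means the rescaled hypersurface $X_j(\M,0)$ is enclosed by a sphere of radius $1/k_0$. Hence the images lie in a fixed compact set, the convergence is global, and $\M_\infty\cong\M$ is compact. No Harnack is needed, and compactness of the limit is exactly what makes the strong maximum principle applicable (the supremum of $\overline k/F$ is actually attained).

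\textbf{Rigidity.} This is the real gap. After the strong maximum principle gives $\overline k_\infty/F_\infty\equiv c$ constant, you write that ``combined with $\overline k_\infty/F_\infty\geq 1$ and the structure of eternal convex solutions, one concludes $\overline k_\infty\equiv F_\infty$''. This does not follow: there is no classification of compact convex ancient solutions for general $F$, and knowing $c\geq 1$ together with ``nonincreasing in time'' on $(-\infty,0]$ does not force $c=1$. The paper's mechanism is different and specific. Once $\overline k_\infty\equiv cF_\infty$, the function $\overline k_\infty-cF_\infty$ is identically zero and hence smooth, so the viscosity inequality for $\overline k$ becomes an \emph{equality}. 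Returning to the computation behind Proposition~\ref{prop:visc} (and its analogue from \cite{NC} for $\overline k$), every nonnegative term that was discarded must now vanish; in particular, the gradient term $2\dot F^{ij}\partial_i k\,R_j{}^p\partial_p k$ in \eqref{eq:a} and the equality case of Proposition~\ref{prop:boundaryestimate} force $\nabla\overline k\equiv 0$, hence $\nabla F_\infty\equiv 0$. Then one invokes the result of Ecker--Huisken \cite{EckHui} that a closed convex hypersurface with constant $F$ is a round sphere, giving $\overline k_\infty\equiv F_\infty$ and the desired contradiction with $\varepsilon_0>0$. This use of the gradient term in the viscosity equation is precisely the ``new ingredient'' the paper highlights, and your sketch does not contain it.
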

\begin{proof}
We will blow the solution up at a point where $F$ is becoming large. Applying the strong maximum principle, and making use of the gradient term appearing in \eqref{eq:LF}, we find that this limit must be a shrinking sphere, from which the claims follow. We note that the only auxillary result we require is the fact that the only closed, convex hypersurfaces of $\R^n$ with $F$ constant are spheres \cite{EckHui}. When $F$ is the mean curvature, this is a well-known theorem of Alexandrov \cite{Al}.

Suppose the first estimate were false. Then there exists a sequence $(x_i,t_i)\in M\times[0,T)$ such that $F(x_i,t_i)\to\infty$ but $\frac{\overline k}{F}(x_i,t_i)\to (1+\varepsilon_0)$, where $\varepsilon_0>0$. By Theorem \ref{thm:NC}, $\varepsilon_0<\infty$. Set $\lambda_i:=F(x_i,t_i)$ and consider the blow-up sequence
\bann
\X_i(x,t):={}&\lambda_i\lb \X\left(x,\lambda_i^{-2}t+t_i\right)-\X_i(x_i,t_i)\rb\,.
\eann
It is easily checked that $\X_i:M^n\times \big[-\lambda_i^2t_i,0]\to \R^{n+1}$ is a solution of the flow \eqref{eq:CF} for each $i$. Moreover, for each $i$, we have $\max_{M\times [-\lambda_i^2t_i,0]}F_i=F_i(x_i,0)=1$ and $\X_i(x_i,0)=0$. It follows that the sequence $\X_i$ converges locally uniformly along a subsequence to a smooth limit flow $\X_\infty:\M_\infty\times (-\infty,0]\to\R^{n+1}$ (cf. \cite{Baker,Breuning,Cooper}).

Since the ratio $\underline k/ F$ is invariant under rescaling, we have 
$$
\frac{\underline k_i}{F_i}(x_i,0)=\frac{\underline k}{F}(x_i,t_i)\geq k_0>0\,,
$$
which implies that the image of each $X_i$ is contained in a compact set. It follows that the convergence is global, so that $\M_\infty\cong \M$. 

We now show that $\underline k/F$ must be constant on the limit flow $\X_\infty$; for if not, by Proposition \ref{prop:visc} and the strong maximum principle (see, for example, \cite{SMP}), its spatial maximum must must decrease monotonically, by an amount $L$ say, on some sub-interval $[t_1,t_2]$ of $(-\infty,0]$. But then (passing to the convergent subsequence) there must exist sequences of times $t_{1,i},t_{2,i}\in \big[-\lambda_i^2t_i,0]$ with $t_{1,i}\to t_1$ and $t_{2,i}\to t_2$ such that
\ba\label{eq:strMP}
L-\varepsilon\leq \max_{M}\frac{\overline k}{F}\lb\,\cdot\,, \lambda_i^{-2}t_{1,i}+t_i\rb-\max_{M}\frac{\overline k}{F}\lb\,\cdot\,, \lambda_i^{-2}t_{2,i}+ t_i\rb
\ea
for any $\varepsilon>0$, so long as $i$ is chosen accordingly large. But since $\lambda_i\to\infty$, the right hand side of \eqref{eq:strMP} converges to zero. It follows that $\sup_{\M\times\{t\}}\overline k/F$ is independent of $t$. Since $\M$ is compact, the space-time supremum of $\overline k/F$ is attained at an interior space-time point, and we deduce that $\overline k/F$ is constant. Since there is a sequence of points $x_i$ for which $\frac{\overline k_i}{F_i}(x_i,0)\to (1+\varepsilon_0)$, we must have $\overline k\equiv (1+\varepsilon_0) F$ on the limit. In particular, we have $0\equiv (\pd_t-\eL)\lb\underline k-(1+\varepsilon_0)F\rb$. But then, computing as in Proposition \ref{prop:visc}, we find $0\equiv \cd \overline k\equiv (1+\varepsilon_0) \cd F$ due to Propositions \ref{prop:interiorestimate} and \ref{prop:boundaryestimate}. But the only closed, convex hypersurfaces of $\R^n$ with $F$ constant are spheres \cite{EckHui}, which satisfy $\overline k\equiv F$. This contradicts $\varepsilon_0>0$.

The proof of the second estimate is similar.
\end{proof}

\begin{rem}
We note that, for flows by convex speed functions, where exterior non-collapsing holds, the proof of the exerior ball estimate goes through. However, for flows by concave speed functions, where interior non-collapsing holds, the proof of the interior ball estimate does not go through without some additional condition (such as a pinching condition) to ensure that the blow-up limit is convex. In fact, due to the examples constructed by Andrews-McCoy-Zheng \cite[\S 5]{AMZ}, one cannot expect such a result to hold in general.
\end{rem}

We now prove the convergence result:

\begin{thm}\label{thm:Euclidean}
Let $\X:\M^n\times [0,T)\to \R^{n+1}$ be a maximal solution of the curvature flow \eqref{eq:CF} such that the speed is a concave, inverse-concave function of the Weingarten map. Then $\X$ converges to a constant $p\in \R^{n+1}$ as $t\to T$, and the rescaled embeddings
\bann
\widetilde\X(x,t):={}&\frac{\X(x,t)-p}{\sqrt{2(T-t)}}
\eann
converge in $C^2$ as $t\to T$ to a limit embedding with image equal to the unit sphere $S^n$.
\end{thm}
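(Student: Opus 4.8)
The plan is to run the standard contraction argument for fully nonlinear flows, now streamlined by the two-sided non-collapsing estimate. First I would establish a priori control of the geometry. Combining Theorem \ref{thm:NC} with the interior non-collapsing estimate of \cite{NC} gives constants $0<k_0\leq K_0$ with $k_0 F\leq\underline k\leq\kappa_{\min}\leq\kappa_{\max}\leq\overline k\leq K_0 F$ for all time; hence the principal curvatures are uniformly pinched, $\kappa_{\max}/\kappa_{\min}\leq K_0/k_0$, so the solution stays in a fixed compact subcone of $\Gamma_+$ on which $F$ is uniformly parabolic and (since $f$ is concave) Krylov--Evans theory applies. The same pinching bounds the ratio of circumradius to inradius of the convex region $\Omega_t$ enclosed by $\X(\M,t)$ by a fixed constant, and the isoperimetric-type estimate (or direct comparison with shrinking spheres via the avoidance principle, \cite[Theorem 5]{NC}) shows $\diam\Omega_t\to 0$ as $t\to T$ and that the family $\Omega_t$ converges to a single point $p\in\R^{n+1}$.

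Next I would pin down the blow-up rate. Since $T<\infty$ and $\max_{\M}F\to\infty$, comparison with spheres gives two-sided bounds of the form $c/(T-t)\leq\max_{\M}F^2(\cdot,t)\leq C/(T-t)$; together with the curvature pinching this yields $c'/\sqrt{T-t}\leq\kappa_i\leq C'/\sqrt{T-t}$ uniformly. I would then define the rescaled embeddings $\widetilde\X(x,t):=(\X(x,t)-p)/\sqrt{2(T-t)}$ and note that the pinching, the circumradius/inradius bound, and the location of $p$ as the limit point force $\widetilde\X(\M,t)$ to lie in a fixed annulus $\{r_1\leq|y|\leq r_2\}$ with $r_1>0$. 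At this stage Theorem \ref{thm:improvingNC} does the decisive work: as $t\to T$ we have $F(\cdot,t)\to\infty$, so for every $\varepsilon>0$ eventually $(1-\varepsilon)F\leq\underline k\leq\kappa_{\min}\leq\kappa_{\max}\leq\overline k\leq(1+\varepsilon)F$, which forces $\kappa_{\max}/\kappa_{\min}\to 1$, i.e. the rescaled hypersurfaces become totally umbilic in the limit. A rescaled hypersurface with all $\kappa_i$ equal and $F$ normalised so that $F(1,\dots,1)=1$ has every principal curvature tending to the same value, and the umbilic limit must be a round sphere; the normalisation $\sqrt{2(T-t)}$ together with the rescaled flow equation $\pd_t\widetilde\X=-\widetilde F\widetilde\nu+\widetilde\X$ forces its radius to be exactly $1$, so any $C^2$ subsequential limit has image $S^n$.

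It remains to upgrade subsequential convergence to genuine $C^2$ convergence as $t\to T$. For this I would argue that the limit is independent of the subsequence: any limit is a round unit sphere centred at the origin (the centring is forced because $p$ is the limit point of $\Omega_t$, so the rescaled regions are balanced about $0$ up to an error going to zero), hence unique, and therefore the full family converges. Higher regularity bounds needed to make the convergence $C^2$ (indeed $C^\infty$) come from the uniform parabolicity and the Krylov--Evans and Schauder estimates already invoked for long-time existence, applied on the rescaled flow, where they give uniform-in-$t$ bounds on all derivatives of the $\widetilde\X(\cdot,t)$. Finally, $C^2$ convergence of $\widetilde\X$ to the unit sphere $S^n$ combined with $\widetilde\X=(\X-p)/\sqrt{2(T-t)}$ gives $\X(\cdot,t)\to p$. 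I expect the main obstacle to be the last step — promoting the subsequential, umbilic-limit statement to uniqueness of the limit and $C^2$ convergence of the whole family — since one must carefully exploit the balancing of $\Omega_t$ about $p$ and the uniform higher-order estimates rather than merely extracting a convergent subsequence; the a priori pinching and blow-up-rate estimates, while essential, are by now routine given Theorems \ref{thm:NC} and \ref{thm:improvingNC}.
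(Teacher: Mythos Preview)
Your outline is correct and rests on the same ingredients as the paper---two-sided non-collapsing for a priori pinching, comparison with spheres for the blow-up rate, and Theorem \ref{thm:improvingNC} to force roundness---but the paper organises the endgame differently and thereby sidesteps the very step you flag as the obstacle. Rather than pass to subsequential limits and argue uniqueness, the paper proves \emph{direct} Hausdorff convergence of the full rescaled family: from Theorem \ref{thm:improvingNC} one gets $r_+(t)\leq(1+\varepsilon)r_-(t)$ for $t$ close to $T$, and comparing with shrinking spheres yields the chain $r_-(t)\leq\sqrt{2(T-t)}\leq r_+(t)\leq(1+\varepsilon)r_-(t)$; the centring issue you raise is then handled by the explicit bound $|p-p_t|/\sqrt{2(T-t)}\leq\sqrt{(1+\varepsilon)^2-1}$, obtained by tracking the evolving circumsphere about the in-centre $p_t$ up to time $T$. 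This gives Hausdorff convergence of $\widetilde X(\M,t)$ to $S^n$ with no subsequence extraction. For the upgrade to $C^2$, the paper does not invoke Krylov--Evans/Schauder on the rescaled flow but simply combines the two-sided rescaled curvature bounds with Hamilton's metric-convergence lemma \cite[Lemma~14.2]{Ham}. Your compactness-plus-uniqueness route also works, but the quantitative Hausdorff argument is shorter and makes the ``balancing about $0$'' step, which you correctly identify as delicate, entirely explicit.
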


\begin{proof}[Proof of Theorem \ref{thm:Euclidean}]
We first apply Theorem \ref{thm:NC} to show that the solution converges uniformly to a point $p\in \R^{n+1}$ in the Hausdorff metric: Observe that $|\X(x_1,t)-\X(x_2,t)|\leq 2r_+(t)$ for every $x_1, x_2\in\M$ and every $t\in[0,T)$, where $r_+(t)$ denotes the circumradius of $\X(\M,t)$ (this is the radius of the smallest ball in $\R^{n+1}$ that contains the hypersurface $\X(\M,t)$). Since $\X$ remains in the compact region enclosed by some initial circumsphere, it suffices to show that $r_+\to 0$ as $t\to T$. But this follows directly from Theorem \ref{thm:NC}: Since $\underline k(x,t)$ is the curvature of the smallest ball which encloses the hypersurface $\X(\M,t)$, and touches it at $\X(x,t)$, we have
$$
\frac{1}{r_+}\geq \max_{\M\times\{t\}}\underline k\geq k_0\max_{\M\times\{t\}}F\,.
$$
But $\max_{\M\times\{t\}}F\to\infty$.

We now deduce Hausdorff convergence of the rescaled hypersurfaces $\widetilde\X(\M,t)$ to the unit sphere: By Theorem \eqref{thm:improvingNC}, for all $\varepsilon>0$ there is a time $t_\varepsilon\in[0,T)$ such that $r_+(t)\leq (1+\varepsilon)r_-(t)$ for all $t\in[t_\varepsilon,T)$, where $r_-(t)$ is the in-radius of $\X(\M,t)$ (the radius of the largest ball enclosed by $\X(\M,t)$). By the avoidance principle the remaining time of existence at each time $t$ is no less than the lifespan of the shrinking sphere of initial radius $r_-(t)$, and no greater than the lifespan of the shrinking sphere of initial radius $r_+(t)$. These observations yield
\ba\label{eq:Hausdorff}
r_-(t)\leq \sqrt{2(T-t)}\leq r_+(t)\leq (1+\varepsilon)r_-(t)
\ea
for all $t\in[t_\varepsilon,T)$. It follows that the circum- and in-radii of the rescaled solution approach unity as $t\to T$. We can also control the distance from the final point $p$ to the centre $p_t$ of any in-sphere of $\X(M,t)$: For any $t'\in [t,T)$, the final point $p$ is enclosed by $\X(M,t')$, which is enclosed by the sphere of radius $\sqrt{r_+(t)^2-2(t'-t)}$ about $p_t$. Taking $t'\to T$ and applying \eqref{eq:Hausdorff} gives
\bann
|p-p_t|\leq \sqrt{r_+(t)^2-2(T-t)}\leq \sqrt{(1+\varepsilon)^2\cdot 2(T-t)-2(T-t)}
\eann
for all $t\in [t_\varepsilon,T)$. Thus
\ba\label{eq:Hausdorff2}
\frac{|p-p_t|}{\sqrt{2(T-t)}} \leq \sqrt{(1+\varepsilon)^2-1}\,.
\ea
This yields the desired Hausdorff convergence of $\widetilde X$ to the unit sphere.

Next we obtain bounds on the curvature of the rescaled flow $\widetilde\X$: Non-collapsing and the inequalities $r_-(t)\leq \sqrt{2(T-t)}\leq r_+(t)$ derived above yield
\bann
\frac{1}{\sqrt{2(T-t)}}\leq \frac{1}{r_-(t)}\leq \min_{x\in \M}\overline k(x,t)\leq k_0\min_{x\in\M} F\leq\frac{k_0}{K_0}\min_{x\in\M}\underline k(x,t)\leq\frac{k_0}{K_0}\min_{x\in\M}\kappa_{\min}(x,t)\,,
\eann
and
\bann
\frac{1}{\sqrt{2(T-t)}}\geq \frac{1}{r_+(t)}\geq \max_{x\in \M}\underline k(x,t)\geq K_0\max_{x\in\M} F\geq\frac{k_0}{K_0}\max_{x\in\M}\overline k(x,t)\geq\frac{K_0}{k_0}\max_{x\in\M}\kappa_{\max}(x,t)\,.
\eann
By a well-known result of Hamilton \cite[Lemma 14.2]{Ham}, this also implies convergence of the rescaled metrics, and we obtain the desired $C^2$-convergence.
\end{proof}

\begin{rem}
One can obtain $C^\infty$-convergence in the above theorem by a standard bootstrapping procedure \cite{Hu84}. Namely, using the time-dependent curvature bounds, one obtains time-dependent bounds on the derivatives of the Weingarten map (of the underlying solution of the flow) to all orders from the curvature derivative estimates. Unfortunately, the resulting estimates do not quite have the right dependence on the remaining time. The correct dependence can be obtained using the interpolation inequality (cf. \cite[\S 9]{Hu84}). This yields exponential $C^\infty$-convergence of the corresponding solution of the normalised flow equation to the unit sphere (cf. \cite[\S 10]{Hu84}). By construction, this yields $C^\infty$-convergence of the rescaled solution.
\end{rem}

\end{document}